\begin{document}

	\title{\textsc{Existence, scaling, and spectral gap for travelling fronts in the 2D renormalized Allen--Cahn equation}}
    \author[1]{\textsc{Gideon Chiusole}\thanks{GC gratefully acknowledges support by the doctoral program TopMath and partial funding by the VolkswagenStiftung via a Lichtenberg Professorship awarded to Christian Kuehn. GC would also like to thank Severin Schraven for helpful discussions regarding Schrödinger operators and dilations.}\,\orcidlink{0009-0005-3930-2924}}
    \author[1]{\textsc{Christian Kuehn}\thanks{CK would like to thank the VolkswagenStiftung for support via a Lichtenberg Professorship.}\,\orcidlink{0000-0002-7063-6173}}
    \affil[1]{\textsc{Technical University of Munich, Munich, Germany.}}
	\date{}

\maketitle

\vspace*{-20pt}

\begin{abstract}
We study the deterministic skeleton of the renormalized stochastic Allen--Cahn equation in spatial dimension $2$. For all sufficiently small regularization parameters $\delta>0$, we construct monotone travelling wave front solutions connecting the renormalized equilibria, derive a small-$\delta$ asymptotic description of their profile and speed, and identify the leading-order contributions. Linearizing about the wave and working in a naturally chosen weighted space, we show that there exists a spectral gap between the symmetry induced eigenvalue $0$ and the rest of the spectrum. The spectral gap grows linearly in the renormalization constant as $\delta\downarrow 0$.
\end{abstract}

\textbf{Keywords:} 
    traveling wave fronts, 
    renormalized Allen--Cahn equation,
    spectral gap,
    Schrödinger operators,
    asymptotic expansion

\textbf{MSC (2020) Classification:} 34L15, 37L15, 35C07, 47A56, 47A53


\tableofcontents
\newpage


\section{Introduction}

This paper is part of a larger effort to understand the stability of patterns in singular stochastic partial differential equations (SPDEs). As a model problem, we consider the stochastic Allen--Cahn equation with additive spacetime white noise $\xi$, which is formally given by

\begin{equation}
    \partial_t u = \Delta u \underbrace{- u(u-\alpha)(u-1)}_{=: f(u)} + \sigma \xi, \quad \alpha \in (0, 1/2), \sigma > 0, \label{eq:stochastic_AC_intro}
\end{equation}

where $u = u(t, \bx)$ is (formally) an unknown time-dependent random scalar field on the full space $\bbR^d$ which we split as $\bx = (x_1, x_{\perp})$ into a longitudinal direction $x_1 \in \bbR$ and a transversal direction $x_{\perp} \in \bbR^{d-1}$.\footnote{The content of this paper concerns the case of spatial dimension $d = 2$, but we keep $d$ general in the following preliminary discussion. While the choice of $\bbR$ in the longitudinal direction is fixed by the shape of the wave profile, the choice of $\bbR^{d-1}$ in the transversal direction is rather arbitrary. There are other reasonable choices, like the torus $\bbT^{d-1}$. As far as the results of this paper are concerned, the choice of transversal direction is irrelevant though.} 

The following two facts about this equation are well known:

\begin{enumerate}
    \item The deterministic version of \eqref{eq:stochastic_AC_intro}, given by formally setting $\sigma=0$, admits planar monotone travelling wave front solutions connecting the two locally asymptotically stable homogeneous equilibria $0$ and $1$ of the non-linearity $f$. That is, there exists a smooth function $\Phi: \mathbb{R} \rightarrow \mathbb{R}$ with 
    
    \begin{equation}
        \lim_{x \rightarrow -\infty} \Phi(x) = 1, \quad \lim_{x \rightarrow +\infty} \Phi(x) = 0, \quad \Phi'(x) < 0 \quad \text{ for all } x \in \mathbb{R}, \label{eq:TW_profile_intro}
    \end{equation}

    and $s \in \mathbb{R}$ such that $u(t,\bx) = \Phi(x_1 - st)$ is a solution of \eqref{eq:stochastic_AC_intro} with $\sigma = 0$ - see e.g. \cite[Sec. 8]{sattingerStabilityWavesNonlinear1976}, \cite[Sec. 2.3.3.2]{kapitulaSpectralDynamicalStability2013}. In the comoving frame $(\eta, x_{\perp}) := (x_1 - st, x_{\perp}) \in \bbR^d$, the planar wave profile $(\eta, x_{\perp}) \mapsto \Phi(\langle \eta, \cdot \rangle)$ is a stationary solution, and the linearization around it is given by

    \begin{equation}
        \underline{\sL} = \Delta + s \partial_{\eta} + f'(\Phi). \label{eq:linearized_AC_intro}
    \end{equation}

    It can be split into a longitudinal part and a transversal part as
    
    \begin{equation}
    \underline{\sL} = \sL \otimes I + I \otimes \Delta_{\perp}, \quad \text{with} \quad \sL = \partial_{\eta}^2 + s \partial_{\eta} + f'(\Phi), \quad \text{and} \quad \Delta_{\perp} = \sum_{i=2}^d \partial_{i}^2. \label{eq:decomp_intro}
    \end{equation}

    The spectral analysis of the operator $\underline{\sL}$ can thus be reduced to the analysis of the one-dimensional operator $\sL$. By abstract principles, due to the translational symmetry of the equation in the direction of propagation $x_1$, the operator $\sL$ has a neutral mode given by $\partial_{\eta} \Phi$, which spans the kernel of $\sL$. However, modulo this neutral mode, the travelling wave solutions described above are stable (in an appropriate sense - see \cite[Thm. 1.2]{kapitulaMultidimensionalStabilityPlanar1997}). Crucial for the proof of this statement is the existence of a spectral gap between $0$ and the rest of the spectrum of $\sL$ (see \cite[Hyp. 1.1]{kapitulaMultidimensionalStabilityPlanar1997}). That is, there exists $\beta > 0$ such that

    \begin{equation}
        \sigma(\sL) \cap \left\{ z \in \bbC : \Real(z) \geq -\beta \right\} = \{ 0 \}. \label{eq:spectral_gap_intro}
    \end{equation}
    
    \item In dimension $d \geq 2$, the stochastic Allen--Cahn Equation \eqref{eq:stochastic_AC_intro} is ill-posed due to the interplay of the irregularity of the white noise $\xi$ and the non-linearity $f$. However, in dimension $d \leq 3$ it can be renormalized to restore well-posedness. More specifically, let $\rho_{\delta}$ be a mollifier at scale $\delta > 0$ and let $\xi_{\delta} = \rho_{\delta} \ast \xi$ denote the regularization of the noise at scale $\delta$ via convolution. Then, for each $d \in \{2, 3\}$, there exists a polynomial $g_{\delta}$, with coefficients depending on the regularization scale $\delta > 0$, such that solutions to the renormalized equation
    
    \begin{equation}
        \partial_t u = \Delta u - u(u-\alpha)(u-1) + g_{\delta, \sigma}(u) + \sigma \xi_{\delta}, \label{eq:ren_AC_first}
    \end{equation}

    converge as $\delta \downarrow 0$ to a limit which is independent of the choice of mollifier $\rho_{\delta}$ - see \cite{dapratoStrongSolutionsStochastic2003, hairerTheoryRegularityStructures2014}. Solutions to Equation \eqref{eq:stochastic_AC_intro} should then be understood as the limit of the solutions to the renormalized equation. In the case of dimension $d = 2$, the renormalization polynomial $g_{\delta}$ can be obtained via Wick renormalization and is given by

    \begin{equation}
        g_{\delta}(u) = 3 C_{\delta, \sigma} u - (1+\alpha) C_{\delta, \sigma}, \label{eq:renormalization_polynomial}
    \end{equation}

    where $C_{\delta, \sigma} \simeq \sigma^2 \log(\delta^{-1})$ as $\delta \downarrow 0$.
\end{enumerate}

These two facts naturally lead us to study the deterministic skeleton 

\begin{equation}
        \partial_t u = \partial_x^2 u + f(u) + g_{\delta, \sigma}(u) = \partial_x^2 u - u^3 + (1 + \alpha) u^2 + (3 C_{\delta, \sigma} - \alpha) u - (1 + \alpha) C_{\delta, \sigma}, \label{eq:ren_AC_skeleton}
\end{equation}

of the renormalized Equation \eqref{eq:ren_AC_first} in the direction of propagation, the existence of travelling wave solutions therein, and the existence a spectral gap for the associated linearization around these waves. With a view~\cite{kuehnTravellingWavesMonostable2020,boschLocalPhaseTracking2025} towards stability results for the stochastic problem \eqref{eq:ren_AC_first} in the renormalized limit $\delta \downarrow 0$, we are not only interested in the qualitative properties of the wave and spectral gap, but also quantitative dependence in terms of the regularization parameter $\delta > 0$ and $\sigma > 0$.

\paragraph{Strategy, organization and main results}

In order to study the singular limit $\delta \downarrow 0$ and hence $C_{\delta, \sigma} \rightarrow \infty$, we begin by extracting the relevant scales of the travelling wave problem of Equation \eqref{eq:ren_AC_skeleton}, which turn out to be $\varepsilon := C_{\delta, \sigma}^{-1/2}$ and its powers. In particular, in Section \ref{sec:equilibria_waves} we compute an asymptotic expansion of the renormalized equilibria $(\zminus, \zzero, \zplus)$ in Lemma \ref{lem:three_real_solutions} and show that there exists a travelling wave solution $(\Pren, \sren)$ of Equation \eqref{eq:ren_AC_skeleton} connecting these equilibria satisfying 

\begin{equation}
    \sup_{x \in \bbR} \vert \Pren(x) \vert \simeq \varepsilon^{-2} , \quad \sren \simeq \varepsilon^2, \label{eq:wave_data_intro}
\end{equation}

in Theorem \ref{thm:wave_data}. From there, in Section \ref{sec:spectral_analysis}, we study the spectral problem as a meromorphic perturbation problem around $\varepsilon = 0$. We introduce the $\varepsilon$-dependent linearized operator $\Lren$ around the renormalized wave $(\Pren, \sren)$ in an $\varepsilon$-dependent comoving frame on an appropriately weighted space and show in Theorem \ref{thm:sigma_ess} and Theorem \ref{thm:sigma_0} that it has a spectral gap $\beta(\varepsilon)$, between $0$ and the rest of the spectrum, which behaves like

\begin{equation}
    \beta(\varepsilon) \simeq \varepsilon^{-2} = C_{\delta, \sigma}.
\end{equation}

The proof is split into two parts - see Figure \ref{fig:spectral_analysis_strategy}. In Subsection \ref{subsec:essential_spectrum} we compute the essential spectrum by conjugating $\Lren$ to an asymptotically constant operator $\Mren$ on an unweighted $L^2$-space and then decomposing $\Mren$ into a sum of an asymptotic operator $\Minfty$ and a relatively compact perturbation $\sK$. By Weyl's theorem, the contribution of $\sK$ does not affect the essential spectrum of $\Minfty$, which can be computed via Fredholm theory and the existence or absence of an exponential dichotomy for an ODE associated to $\Minfty$. In Subsection \ref{subsec:discrete_spectrum} we compute the discrete spectrum. We firstly reduce the spectral gap problem for the Sturm--Liouville operator $\Lren$ on a weighted space to a problem for a Schrödinger operator $\Hren$, by using the strong decay properties of Schrödinger eigenfunctions in classically forbidden regions (Agmon decay). We then extend the domain of the parameter $\varepsilon$ to a small punctured ball around $0$, leading to a meromorphic perturbation problem. Finally, we use dilation to reduce further to a regular perturbation problem of Schrödinger operators.

\paragraph{Notation}

Throughout the paper, function spaces are over the complex numbers. The inner product on $L^2(\bbR)$ is given by $\langle f, g \rangle_{L^2} = \int_{\bbR} \overline{f(x)} g(x) \dd x$ for $f, g \in L^2(\bbR)$. For a (complex) Banach space $X$, we denote by $\calB(X)$ the space of bounded linear operators and by $I$ the identity operator on $X$. For a closed linear operator $\sL$ on $X$ with domain $\calD(\sL)$, we denote by $\rho(\sL)$ its resolvent set and for $z \in \rho(\sL)$, the resolvent operator is given by $\calR(\sL, z) := (\sL - z I)^{-1}$. The sets $\sigma_{\ess}(\sL)$, $\sigma_0(\sL)$, and $\sigma_{\pt}(\sL)$ denote the essential spectrum and its complement (defined in \eqref{eq:sigma_ess} and \eqref{eq:sigma_0}) and the set of eigenvalues (i.e. the set of $z \in \bbC$ such that $\sL - z I: \calD(\sL) \rightarrow X$ is not injective), respectively. 

For two real numbers $A_{\varepsilon},B_{\varepsilon}$, depending on a parameter $\varepsilon$, the symbol $A_{\varepsilon} \lesssim B_{\varepsilon}$ means that there exists a constant $C > 0$, independent of $\varepsilon$, such that $A_{\varepsilon} \leq C B_{\varepsilon}$. The notation $A_{\varepsilon} \simeq B_{\varepsilon}$ means that both $A_{\varepsilon} \lesssim B_{\varepsilon}$ and $B_{\varepsilon} \lesssim A_{\varepsilon}$ hold. 

The set $B(\varepsilon_0) := \{ z \in \bbC : \vert z \vert < \varepsilon_0 \}$ denotes the open ball of radius $\varepsilon_0 > 0$ around $0$ in the complex plane and $B'(\varepsilon_0) := B(\varepsilon_0) \setminus \{ 0 \}$ the corresponding punctured ball. The set $\bbR_+ := [0, \infty)$ and $\bbR_- := (- \infty, 0]$ denote the non-negative and non-positive half axis, respectively. The real and imaginary part of $z \in \bbC$ are denoted by $\Real(z)$ and $\Ima(z)$, respectively.

Generally, conceptual type ($\Ren, \hol, \ldots$) and spatial position (positive/negative/center/asymptotic: $+,-, 0, \infty, \ldots$) will be displayed as superscript, while parameter dependence (mostly $\varepsilon$) and index (mostly $0, 1, \ldots$) will be displayed as a subscript.

\begin{figure}[htbp]
    \centering
    \includegraphics[width=\textwidth]{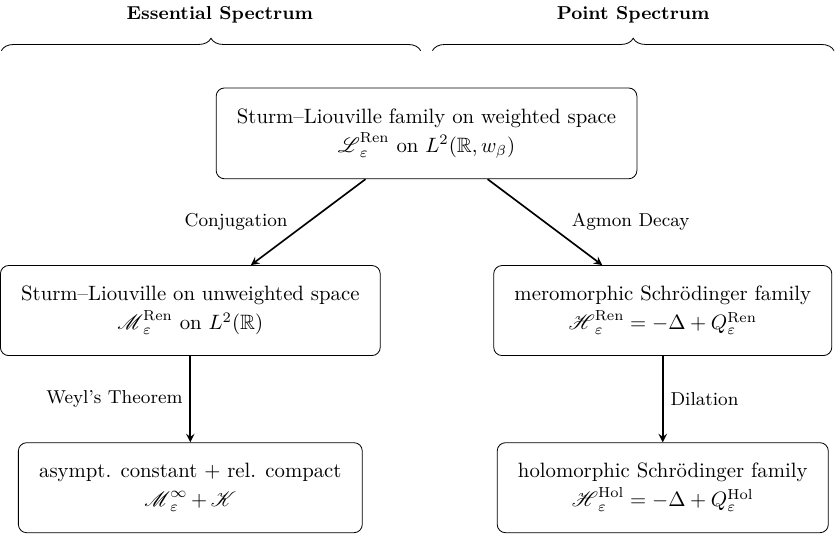} 
    \caption{Overview over the proof of the spectral gap problem.}
    \label{fig:spectral_analysis_strategy}
\end{figure}


\section{Renormalized equilibria and waves} \label{sec:equilibria_waves}

Throughout the paper, we will work around the limiting case $C_{\delta, \sigma} = \infty$. In the later sections, we want to consider this as a meromorphic perturbation problem and thus immediately switch scales to a small parameter $\varepsilon := C_{\delta, \sigma}^{-1/2}$. The reason for the choice of exponent will become apparent in the following section. The non-linearity in \eqref{eq:ren_AC_skeleton} thus takes the form

\begin{equation}
    \fren(u) := - u^3 + (1 + \alpha) u^2 + (3 \varepsilon^{-2} - \alpha) u - (1 + \alpha) \varepsilon^{-2}, \label{eq:nonlinearity}
\end{equation}

where $\alpha \in (0, 1/2)$ is fixed throughout the paper and thus suppressed in the notation. In this section we study the roots of $\fren$ in terms of $\varepsilon \downarrow 0$ in order to compute the asymptotic states of the travelling wave solutions of \eqref{eq:ren_AC_skeleton} which will be derived in Theorem \ref{thm:wave_data}. Because in Section \ref{subsec:discrete_spectrum} we will rely on holomorphic perturbation theory, we will immediately allow $\varepsilon \in \bbC$ in the following.

\begin{lem} \label{lem:three_real_solutions}
    There exists an $\varepsilon_0 > 0$ and holomorphic functions $z^-, z^0, z^+: B'(\varepsilon_0) \rightarrow \bbC$ such that
        
    \begin{equation}
        \fren(\zminus) = \fren(\zzero) = \fren(\zplus) = 0, \quad \forall \varepsilon \in B'(\varepsilon_0), \label{eq:solution}
    \end{equation}

    which are real-valued for $\varepsilon \in B'(\varepsilon_0) \cap (0, \infty)$ and satisfy

    \begin{equation}
        \zminus < \zzero < \zplus, \quad \text{for } \varepsilon \in B'(\varepsilon_0) \cap (0, \infty). \label{eq:ordering}
    \end{equation}

    The leading parts of their Laurent series around $\varepsilon = 0$ are 

    \begin{align}
        \zzero &= \frac{1 + \alpha}{3} + \calO(\varepsilon^2), \\
        \zpm &= \pm \sqrt{3} \varepsilon^{-1} + \frac{1 + \alpha}{3} + \calO(\varepsilon). \label{eq:z_pm_laurent}
    \end{align}

    In particular, the expansion of $\zzero$ has no linear contribution.
\end{lem}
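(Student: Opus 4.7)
The key idea is to absorb the singular behavior as $\varepsilon\downarrow 0$ into a rescaling, turning the problem into a regular holomorphic perturbation of a fixed cubic with three simple roots. Since $\fren$ develops an unbounded linear coefficient $3\varepsilon^{-2}$, I multiply by $\varepsilon^3$ and substitute $u = \varepsilon^{-1} w$. A short computation gives the entire polynomial
\begin{equation}
    P(w;\varepsilon) := \varepsilon^{3}\,\fren(\varepsilon^{-1} w) = -w^{3} + 3w + \varepsilon\bigl[(1+\alpha)w^{2} - (1+\alpha)\bigr] - \alpha\,\varepsilon^{2} w,
\end{equation}
which at $\varepsilon = 0$ reduces to $-w^{3}+3w = -w(w-\sqrt{3})(w+\sqrt{3})$, i.e. it has three simple roots $w_{0}^{-}=-\sqrt{3}$, $w_{0}^{0}=0$, $w_{0}^{+}=+\sqrt{3}$, at each of which $\partial_{w} P(\cdot;0) \in \{3, -6\}\setminus\{0\}$.

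By the holomorphic implicit function theorem there is $\varepsilon_{0}>0$ and three holomorphic branches $w^{-}, w^{0}, w^{+} : B(\varepsilon_{0}) \to \bbC$ with $w^{\bullet}(0) = w_{0}^{\bullet}$ and $P(w^{\bullet}(\varepsilon);\varepsilon) = 0$. I then set $z^{\bullet}(\varepsilon) := \varepsilon^{-1} w^{\bullet}(\varepsilon)$, which for $\varepsilon \neq 0$ satisfies $\fren(z^{\bullet}(\varepsilon)) = 0$. Because $w^{\pm}(0) = \pm\sqrt{3}\neq 0$, the functions $z^{\pm}$ have simple poles at the origin and are therefore holomorphic on $B'(\varepsilon_{0})$; because $w^{0}(0)=0$, the quotient $z^{0} = w^{0}/\varepsilon$ is in fact holomorphic across $\varepsilon=0$.

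Real-valuedness for real $\varepsilon$ follows from a standard conjugation/uniqueness argument: since the coefficients of $P$ are real, $\varepsilon\mapsto \overline{w^{\bullet}(\overline{\varepsilon})}$ is another holomorphic branch taking the value $w_{0}^{\bullet}\in\bbR$ at $\varepsilon=0$, so by local uniqueness in the implicit function theorem it agrees with $w^{\bullet}$. The ordering on $B'(\varepsilon_{0})\cap(0,\infty)$ is immediate from $z^{\pm}(\varepsilon) \sim \pm\sqrt{3}\,\varepsilon^{-1}$ while $z^{0}(\varepsilon)\to (1+\alpha)/3$ as $\varepsilon\downarrow 0$ (after possibly shrinking $\varepsilon_{0}$).

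Finally, the explicit expansions come from inserting the Taylor ansatz $w^{\bullet}(\varepsilon) = w_{0}^{\bullet} + c_{1}^{\bullet}\varepsilon + c_{2}^{\bullet}\varepsilon^{2}+\cdots$ into $P(w^{\bullet};\varepsilon)=0$ and matching orders. At order $\varepsilon^{1}$ one reads off $c_{1}^{0} = (1+\alpha)/3$ and $c_{1}^{\pm} = (1+\alpha)/3$ (from $6c_{1}^{\pm} = 2(1+\alpha)$); at order $\varepsilon^{2}$ for the central branch one obtains $3c_{2}^{0}=0$, hence $c_{2}^{0}=0$. Undoing the rescaling $z^{\bullet} = \varepsilon^{-1} w^{\bullet}$ gives exactly the stated Laurent expansions, and in particular the absence of a linear term in $z^{0}$ is precisely the identity $c_{2}^{0}=0$. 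There is no genuine obstacle in this proof; the only conceptual step is identifying the scaling $u = \varepsilon^{-1} w$ that converts the singular cubic $\fren$ into a regular perturbation of $-w^{3}+3w$, and this is also what fixes the natural exponent in the definition $\varepsilon := C_{\delta,\sigma}^{-1/2}$.
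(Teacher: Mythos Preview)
Your proof is correct and follows the same core strategy as the paper: rescale via $u=\varepsilon^{-1}w$ to turn $\fren$ into the regular family $P(w;\varepsilon)=\varepsilon^{3}\fren(\varepsilon^{-1}w)$, apply the holomorphic implicit function theorem at the three simple roots of $-w^{3}+3w$, and unwind.

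There are two minor tactical differences worth noting. First, the paper treats the bounded branch separately via $g^{0}_{\varepsilon}(z):=\varepsilon^{2}\fren(z)$, which at $\varepsilon=0$ is the linear polynomial $3z-(1+\alpha)$; since the coefficients of $g^{0}_{\varepsilon}$ depend only on $\varepsilon^{2}$, the resulting branch is automatically a holomorphic function of $\varepsilon^{2}$, giving the absence of the linear term in $z^{0}$ without computing $c_{2}^{0}$ explicitly. Your single-rescaling approach is more uniform but trades this structural observation for the direct order-$\varepsilon^{2}$ match $3c_{2}^{0}=0$. Second, for real-valuedness on $(0,\varepsilon_{0})$ the paper checks positivity of the discriminant of the associated depressed cubic, while your conjugation/uniqueness argument ($\overline{w^{\bullet}(\overline{\varepsilon})}=w^{\bullet}(\varepsilon)$) is arguably cleaner and reusable. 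Neither difference is substantive.
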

\begin{proof}
    In order to see that $\fren$ has three distinct real solutions for $\varepsilon$ small enough and real, recall that $\fren$ can be brought into the standard form of a depressed cubic $\fren(z) = y^3 + b_1 y + b_0$ via the variable/parameter change
    
    \begin{align}
        y &:= - \left(z - \frac{1+\alpha}{3} \right), \label{eq:transformation_y} \\
        b_1 &:= -3 \varepsilon^{-2} + \frac{3 \alpha - (1+\alpha)^2}{3}, \label{eq:asymptotic_b1} \\
        b_0 &:= \frac{2(1+\alpha)^3 - 9 \alpha (1+\alpha)}{27}, \label{eq:asymptotic_b0}
    \end{align}
    
    see \cite[Sec. 14.6, p. 611]{dummitAbstractAlgebra2004}. Choosing $\varepsilon_0^{(1)} > 0$ small enough, the discriminant $\Disc(p_{\varepsilon}(y)) = - 4 b_1^3 - 27 b_0^2$ of $p_{\varepsilon}(y) := y^3 + b_1 y + b_0$, is non-zero for every $\varepsilon \in B'(\varepsilon_0^{(1)})$ and positive for every $\varepsilon \in B'(\varepsilon_0^{(1)}) \cap (0, \infty)$. Hence by \cite[Sec. 4.4, Bem. 3]{boschAlgebra2020} the depressed cubic $p_{\varepsilon}$ has three distinct complex solutions for every $\varepsilon \in B'(\varepsilon_0^{(1)})$ and three distinct real solutions for every $\varepsilon \in B'(\varepsilon_0^{(1)}) \cap (0, \infty)$ and the same is thus true for $\fren$.
    
    For the bounded solution branch $\varepsilon \mapsto \zzero$, consider the polynomial function 
    
    \begin{equation}
        g^{0}_{\varepsilon}(z) := \varepsilon^2 \fren(z) = - \varepsilon^2 z^3 + \varepsilon^2 (1 + \alpha) z^2 + (3 - \varepsilon^2 \alpha) z - (1 + \alpha), \label{eq:scaled_nonlin}
    \end{equation}
    
    for $\varepsilon \in \bbC$ and $z \in \bbC$ and note that for $\varepsilon = 0$ the polynomial $g^{0}_0(z) = 3z - (1 + \alpha)$ has a single simple root at $z = (1 + \alpha)/3$. Thus by Lemma \ref{lem:existence_of_holo_sol} there exists $\varepsilon_0^{(2)} > 0$ and a holomorphic function $\zzero: B(\varepsilon_0^{(2)}) \rightarrow \bbC$ such that $g^0_{\varepsilon}(\zzero) = 0$ for every $\varepsilon \in B(\varepsilon_0^{(2)})$. Since we may write the coefficients of $g^0_{\varepsilon}$ as holomorphic functions in $\varepsilon^2$ (instead of just $\varepsilon$), the Taylor expansion of $\zzero$ around $\varepsilon = 0$ only contains even powers of $\varepsilon$ and thus has the form 

    \begin{equation}
        \zzero = \frac{1 + \alpha}{3} + \calO(\varepsilon^2). \label{eq:laurent_z0}
    \end{equation}

    Since the solutions of $g^0_{\varepsilon}(z) = 0$ and $\fren(z) = 0$ coincide for every $\varepsilon \neq 0$, the restriction of $\zzero$ to $B'(\varepsilon_0^{(2)})$ is holomorphic in $\varepsilon$ and satisfies \eqref{eq:solution} for every $\varepsilon \in B'(\varepsilon_0^{(2)})$.

    For the two unbounded branches $\varepsilon \mapsto \zpm$, consider the polynomial 

   \begin{equation}
        g^{\pm}_{\varepsilon}(z) := \varepsilon^3 \fren(\varepsilon^{-1} z) = - z^3 + \varepsilon (1 + \alpha) z^2 + (3 - \alpha \varepsilon^2) z - \varepsilon (1 + \alpha). \label{eq:unbounded_branch_poly}
    \end{equation}
    
    For $\varepsilon = 0$, this polynomial reduces to $g^{\pm}_0(z) = - z^3 + 3z$, which has three distinct solutions $\{0, \pm \sqrt{3}\}$. Hence, by Lemma \ref{lem:existence_of_holo_sol} there exists $\varepsilon_0^{(3)}>0$ and holomorphic functions $\tilde{z}_1, \tilde{z}_2, \tilde{z}_3: B(\varepsilon_0^{(3)}) \rightarrow \bbC$ such that $g^{\pm}_{\varepsilon}(\tilde{z}_k(\varepsilon)) = 0$ for every $k = 1, 2, 3$ and $\varepsilon \in B(\varepsilon_0^{(3)})$. Therefore $\varepsilon \mapsto z_k(\varepsilon) := \varepsilon^{-1} \tilde{z}_k(\varepsilon)$ are solutions of $\fren(z) = 0$ and holomorphic on $B'(\varepsilon_0^{(3)})$ and hence satisfy \eqref{eq:solution}. By the above, their Laurent series around $\varepsilon = 0$ have the form

    \begin{equation}
        \zpm = \pm \sqrt{3} \varepsilon^{-1} + c_0 \varepsilon^0 + \calO(\varepsilon). \label{eq:laurent_xpm}
    \end{equation}

    To obtain the coefficient at order $\varepsilon^0$ in \eqref{eq:laurent_xpm}, we plug the ansatz \eqref{eq:laurent_xpm} for the Laurent series into $\fren(\zplus) = 0$. The resulting equation at order $\varepsilon^{-2}$ reads 

    \begin{equation}
        0 = \alpha (\pm \sqrt{3})^2 - \alpha - 3 c_0 (\pm \sqrt{3})^2 + 3 c_0 + (\pm \sqrt{3})^2 - 1 = 2 \alpha - 6 c_0 + 2,
    \end{equation}

    which is solved by $c_0 = (1 + \alpha)/3$. 

    Finally, set $\varepsilon_0 := \min \{ \varepsilon_0^{(1)}, \varepsilon_0^{(2)}, \varepsilon_0^{(3)} \}$ and note that by choosing $\varepsilon_0$ smaller if necessary, we may ensure that the ordering \eqref{eq:ordering} holds true by continuity of the functions $\zminus, \zzero, \zplus$ on $B'(\varepsilon_0)$.
\end{proof}

Recall that the unrenormalized Allen--Cahn equation admits travelling wave solutions in dimension $1$ (and thus any dimension). More precisely, for any $\alpha \in (0,1/2)$ the equation 

\begin{equation}
    \partial_t u = \partial_x^2 u - u(u-\alpha)(u-1) \label{eq:huxley}
\end{equation}

is solved by $u(t,x) := \Phux( x - \shux t)$, where 

\begin{equation}
    \Phux(x) := \frac{1}{2} \left( 1 + \tanh \left( \frac{\sqrt{2}}{4} x \right) \right) = \left( 1 + \exp\left(-x/\sqrt{2}\right) \right)^{-1}, \quad \shux = \sqrt{2} \left( \alpha - \frac{1}{2} \right). \label{eq:huxley_quant}
\end{equation}

see e.g. \cite[Sec. 8]{sattingerStabilityWavesNonlinear1976}. We proceed by showing that the renormalized skeleton equation of the Allen--Cahn Equation \eqref{eq:ren_AC_skeleton} also admits such travelling wave solutions, by transforming it into the form of \eqref{eq:huxley}.

\begin{thm} \label{thm:wave_data}
    Let $\varepsilon_0 > 0$ and $\zminus, \zzero, \zplus$ be as in Lemma \ref{lem:three_real_solutions} and let $\varepsilon \in B'(\varepsilon_0) \cap(0, \infty)$ be arbitrary and define 

    \begin{equation}
        \Aren := \zplus - \zminus = \sqrt{12} \varepsilon^{-1} + \calO(\varepsilon). \label{eq:Aren}
    \end{equation}
    
    Then Equation \eqref{eq:ren_AC_skeleton} admits travelling wave solutions with shape and speed given by

    \begin{equation}
        \Pren(x) = \Aren \Phux(\Aren x) + \zminus, \quad \sren = \calO(\varepsilon^2), \label{eq:ren_quantities}
    \end{equation}

    satisfying $\lim_{x \rightarrow \pm \infty} \Pren(x) = \zpm$.
\end{thm}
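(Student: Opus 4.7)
The plan is to recognize that after an affine rescaling of amplitude and space, the travelling wave ODE associated with \eqref{eq:ren_AC_skeleton} becomes identical to the Huxley travelling wave ODE for \eqref{eq:huxley}, so that $(\Pren, \sren)$ can simply be read off from the explicit wave \eqref{eq:huxley_quant}. Concretely, I would first factor $\fren(u) = -(u - \zminus)(u - \zzero)(u - \zplus)$ via Lemma \ref{lem:three_real_solutions} and then set $u = \Aren v + \zminus$, $y = \Aren x$; each linear factor absorbs one copy of $\Aren$, giving
\[
\fren(\Aren v + \zminus) = -(\Aren)^3 \, v\,(v - \alpha_\varepsilon)(v - 1), \qquad \alpha_\varepsilon := \frac{\zzero - \zminus}{\Aren}.
\]
Substituting the ansatz $\Phi(x) = \Aren \Psi(\Aren x) + \zminus$ of speed $s$ into the travelling wave ODE $\Phi'' + s \Phi' + \fren(\Phi) = 0$ and dividing by $(\Aren)^3$ produces the Huxley travelling wave ODE
\[
\Psi'' + \frac{s}{\Aren}\,\Psi' - \Psi(\Psi - \alpha_\varepsilon)(\Psi - 1) = 0
\]
with bistable parameter $\alpha_\varepsilon$ and wave speed $s/\Aren$. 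Since the profile \eqref{eq:huxley_quant} is parameter-free—only the Huxley speed depends on the bistable parameter—this forces $\Psi = \Phux$ and $s/\Aren = \sqrt{2}(\alpha_\varepsilon - 1/2)$. Undoing the scaling then produces the profile in \eqref{eq:ren_quantities}, and the limits $\Pren(\pm\infty) = \zpm$ follow from $\Phux(-\infty) = 0$, $\Phux(+\infty) = 1$.

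For the quantitative claim $\sren = \calO(\varepsilon^2)$, the key input is a Vieta identity for the depressed cubic $p_\varepsilon$ from the proof of Lemma \ref{lem:three_real_solutions}: the sum of its three roots is zero, which under the substitution \eqref{eq:transformation_y} translates into $\zminus + \zzero + \zplus = 1 + \alpha$. Combined with the expansion $\zzero = (1+\alpha)/3 + \calO(\varepsilon^2)$, this yields
\[
2\zzero - \zminus - \zplus = 3\zzero - (1+\alpha) = \calO(\varepsilon^2),
\]
and hence $\alpha_\varepsilon - 1/2 = (2\zzero - \zminus - \zplus)/(2\Aren) = \calO(\varepsilon^3)$, using $\Aren = \calO(\varepsilon^{-1})$. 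Consequently $\sren = \sqrt{2}\,\Aren (\alpha_\varepsilon - 1/2) = \calO(\varepsilon^2)$, as claimed.

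The only real subtlety sits in this last cancellation: naively feeding the Laurent expansions from Lemma \ref{lem:three_real_solutions} into the definition of $\alpha_\varepsilon$ gives only $\alpha_\varepsilon - 1/2 = \calO(\varepsilon^2)$ and hence $\sren = \calO(\varepsilon)$, which would be insufficient. The improvement to $\calO(\varepsilon^3)$ rests precisely on the vanishing of the linear term in the expansion of $\zzero$—equivalently, the fact that $g^0_\varepsilon$ in \eqref{eq:scaled_nonlin} is holomorphic in $\varepsilon^2$, as emphasized at the end of Lemma \ref{lem:three_real_solutions}—together with the Vieta identity. Once this observation is in place, the remainder of the argument is a routine algebraic reduction to \eqref{eq:huxley_quant}.
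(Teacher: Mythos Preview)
Your reduction to the Huxley problem is essentially the paper's: the paper performs the same affine change $u = \Aren v + \zminus$ at the PDE level (rescaling time by $\Aren^{2}$ and space by $\Aren$), while you work directly on the travelling wave ODE, but the content and the resulting formulas for $\Pren$ and $\sren = \Aren\sqrt{2}(\alpha_\varepsilon - 1/2)$ are identical. Where you genuinely diverge is the speed asymptotics. The paper expands numerator and denominator of $\alpha_\varepsilon$ separately, obtaining only $\alpha_\varepsilon = 1/2 + \calO(\varepsilon^2)$ by Taylor matching; its concluding line then reads $\sren = \sqrt{2}(\alpha_\varepsilon - 1/2) = \calO(\varepsilon^2)$, silently omitting the factor $\Aren \simeq \varepsilon^{-1}$. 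Your Vieta identity $\zminus + \zzero + \zplus = 1+\alpha$ rewrites $\alpha_\varepsilon - 1/2 = (3\zzero - (1+\alpha))/(2\Aren)$ and, combined with the evenness of $\zzero$ in $\varepsilon$, gives the sharper $\alpha_\varepsilon - 1/2 = \calO(\varepsilon^3)$ directly; this is precisely the extra order needed to survive multiplication by $\Aren$ and land at $\sren = \calO(\varepsilon^2)$. In fact your computation shows more: since $\sren = (1/\sqrt{2})(3\zzero - (1+\alpha))$ and $\zzero$ is a function of $\varepsilon^2$, one sees that $\sren$ is even in $\varepsilon$, which is useful context for the remark following the theorem.
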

\begin{proof}
    We will transform Equation \eqref{eq:ren_AC_skeleton} to the standard form in \eqref{eq:huxley}. To this end, for any $\varepsilon \in B'(\varepsilon_0) \cap (0, \infty)$ and $x \in \bbR$, define

    \begin{align}
        \ahux &:= \frac{\zzero - \zminus}{\zplus - \zminus} \label{eq:alpha_eps}\\
        \fhux(x) &:= \Aren^{-3} \left(\fren(\Aren x + \zminus)\right). \label{eq:f_tilde}
    \end{align}

    Notice that $\fhux$ is a polynomial of order $3$ with leading coefficient $- \Aren^3 \Aren^{-3} = -1$ which has roots at $0, \ahux$, and $1$ (corresponding to the roots $\zminus, \zzero, \zplus$ of $\fren$). Therefore 

    \begin{equation}
        \fhux(u) = -u(u-\ahux)(u-1), \label{eq:f_tilde_explicit}
    \end{equation}

    which has the form of Equation \eqref{eq:huxley}. Now, for any solution $v: \bbR \times \bbR \rightarrow \bbR$ of 

    \begin{equation}
        \partial_t v = \partial_x^2 v + \fhux(v), 
    \end{equation}

    define 

    \begin{equation}
        u(t,x) := \Aren v(\Aren^{2} t, \Aren x) + \zminus
    \end{equation}

    and observe that 

    \begin{align}
        \partial_t u(t, x) &= \Aren^3 (\partial_t v)(\Aren^{2} t, \Aren x) \\
        &= \Aren^3 (\partial_x^2 v)(\Aren^{2} t, \Aren x) + \Aren^3 \fhux(v(\Aren^{2} t, \Aren x)) \\
        &= \Aren \partial_x^2 (v(\Aren^{2} t, \Aren x) + \zminus) + \Aren^3 \fhux(v(\Aren^{2} t, \Aren x)) \\
        &= \partial_x^2 u(t,x) + \fren(\Aren v(\Aren^{2} t, \Aren x) + \zminus) \\
        &= \partial_x^2 u(t,x) + \fren(u(t,x)).
    \end{align}

    Therefore, since \eqref{eq:huxley} is solved by $(t,x) \mapsto \Phux(x - \shux t)$, Equation \eqref{eq:ren_AC_skeleton} is solved by 

    \begin{equation}
        (t,x) \mapsto u(t,x) = \Aren v(\Aren^{2} t, \Aren x) + \zminus = \Aren \Phux(\Aren x - \Aren^{2} \shux t) + \zminus,
    \end{equation}
    
    which is a travelling wave with profile and speed given by 

    \begin{equation}
        \Pren(x) = \Aren \Phux(\Aren x) + \zminus, \quad \sren = \Aren \shux = \Aren \sqrt{2} \left( \ahux - \frac{1}{2} \right).
    \end{equation}
    
    The asymptotic expansion of $\Aren$ follows directly from \eqref{eq:z_pm_laurent}. For the asymptotic expansion of $\sren$, notice firstly that

    \begin{equation}
        \ahux = \frac{\frac{1+\alpha}{3} + \calO(\varepsilon^2) + \sqrt{3}\varepsilon^{-1} - \frac{1+\alpha}{3} + \calO(\varepsilon)}{\sqrt{3}\varepsilon^{-1} + \frac{1+\alpha}{3} + \calO(\varepsilon) + \sqrt{3}\varepsilon^{-1} - \frac{1+\alpha}{3} + \calO(\varepsilon)} = \frac{\sqrt{3}\varepsilon^{-1}(1 + \calO(\varepsilon^2))}{\sqrt{3}\varepsilon^{-1}(2 + \calO(\varepsilon^2))} = \frac{1 + \calO(\varepsilon^2)}{2 + \calO(\varepsilon^2)}.
    \end{equation}
    
    and that $\ahux$ is real analytic on $B'(\varepsilon_0) \cap(0, \infty)$ because $\zminus, \zzero, \zplus$ are holomorphic on $B'(\varepsilon_0)$ by Lemma \ref{lem:three_real_solutions}. Since its denominator has non-zero constant part, $\ahux$ has a real-analytic extension across $0$ and its Taylor series around $\varepsilon = 0$ must satisfy 

    \begin{equation}
        (c_0 + c_1 \varepsilon + \calO(\varepsilon^2))(2 + \calO(\varepsilon^2)) = 1 + \calO(\varepsilon^2),
    \end{equation}

    and hence $c_0 = 1/2$ and $2 c_1 = 0$. In particular, $\ahux = 1/2 + \calO(\varepsilon^2)$ and thus $\sren = \sqrt{2}(\ahux - 1/2) = \calO(\varepsilon^2)$. 

    Finally, the asymptotic values $\lim_{x \rightarrow \pm \infty} \Pren(x) = \zpm$ follow from $\Aren > 0$ (due to \eqref{eq:ordering}) and the asymptotic values of $\Phux$ being $0$ and $1$.
\end{proof}

\begin{rem}
    By computing the Laurent expansions in Lemma \ref{lem:three_real_solutions} to a higher degree, it can be shown that the speed $\sren$ is actually of order $\calO(\varepsilon^3)$ and negative for $\varepsilon$ real, positive, and small enough. However, this will not be relevant for the rest of the paper.
\end{rem}


\section{Spectral analysis} \label{sec:spectral_analysis}

We now turn to the analysis of the spectrum of the family of ordinary differential operators, formally given by

\begin{equation}
    \Lren u = \partial_x^2 u + \sren \partial_x u + \fren'(\Pren) u. \label{eq:Lren}
\end{equation}

with $\fren$ and $\sren$ as defined in \eqref{eq:nonlinearity} and \eqref{eq:ren_quantities}, respectively. With a view towards the analysis of the stochastic Allen--Cahn Equation \eqref{eq:stochastic_AC_intro} on the full space $\bbR^2$ we must consider an integrable spatial weight for the underlying function space \cite{mourratGlobalWellposednessDynamic2017, gubinelliGlobalSolutionsElliptic2019}, which we chose as

\begin{equation}
    w_{\beta}(x) := \langle x \rangle^{- \beta} = (1 + \vert x \vert^2)^{-\beta/2}, \quad \beta > 2, \label{eq:weight}
\end{equation}

Its product with the same weight in the transversal direction $w_{\beta}(\bx) = w_{\beta}(x) w_{\beta}(x_{\perp})$ is integrable on $\bbR^2$ and equivalent (as a weight) to the one used in \cite{mourratGlobalWellposednessDynamic2017} to construct solutions which are global in time and space. Write $L^2(\bbR, w_{\beta})$ for the associated weighted $L^2$-space, which contains $L^2(\bbR)$ as a subspace and recall that the operator $T: L^2(\bbR, w_{\beta}) \rightarrow L^2(\bbR)$, defined by $Tu = w_{\beta}^{1/2} u$ is an isometric isomorphism. 

\begin{lem} \label{lem:Lren}
    Let $\varepsilon_0 > 0$ be as in Lemma \ref{lem:three_real_solutions} and let $\varepsilon \in B'(\varepsilon_0) \cap (0, \infty)$. Then the operator $\Lren$ with domain $H := T^{-1}(H^2(\bbR))$, as defined in \eqref{eq:Lren}, is densely defined and closed on $L^2(\bbR, w_{\beta})$, and $T$-conjugate to the closed linear operator $\Mren: H^2(\bbR) \subseteq L^2(\bbR) \rightarrow L^2(\bbR)$ defined by

    \begin{equation}
        \Mren := \partial_x^2 + \underbrace{\left( \sren + \frac{\beta x}{1+x^2} \right)}_{=: \aone(x)} \partial_x + \underbrace{\left( \sren \frac{\beta x}{2(1+x^2)} + \frac{(\beta^2 - 2 \beta)x^2 + 2 \beta}{4(1+x^2)^2} + \fren'(\Pren) \right)}_{=: \anull(x)}. \label{eq:Mren}
    \end{equation}
\end{lem}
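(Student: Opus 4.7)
The plan is to verify that the isometric conjugation by $T = w_{\beta}^{1/2}\,\cdot\,$ carries the formal expression \eqref{eq:Lren} on $L^2(\bbR, w_{\beta})$ to the explicit second-order expression $\Mren$ on $L^2(\bbR)$ with the stated bounded coefficients, and then to prove closedness of $\Mren$ on the standard Sobolev domain $H^2(\bbR)$ via a Kato--Rellich perturbation argument. Because $T$ is an isometric isomorphism and $H$ is defined as $T^{-1}(H^2(\bbR))$, closedness and density transport back to $\Lren$ on $H$ for free.

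First I would carry out the conjugation by direct calculation on the dense subspace $C^{\infty}_c(\bbR) \subseteq H^2(\bbR)$. Set $\phi(x) := \tfrac{\beta}{4}\log(1+x^2)$, so that $T^{-1} v = e^{\phi} v$ and $T = e^{-\phi}$ (viewed as a multiplication operator); one then computes $\phi'(x) = \tfrac{\beta x}{2(1+x^2)}$ and $\phi''(x) = \tfrac{\beta(1-x^2)}{2(1+x^2)^2}$. The product rule yields
\[
T \partial_x^2 T^{-1} v = v'' + 2\phi' v' + \bigl(\phi'' + (\phi')^2\bigr) v, \qquad T \partial_x T^{-1} v = v' + \phi' v,
\]
while $T$ commutes with multiplication by $\fren'(\Pren)$. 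Combining the three contributions with the prefactors from \eqref{eq:Lren} and collecting terms reproduces exactly the coefficients $\aone$ and $\anull$ asserted in the statement.

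Second, I would establish closedness of $\Mren$ on $H^2(\bbR)$. Each weight-induced summand appearing in $\aone$ and $\anull$ is a rational function of $x$ whose numerator degree is strictly below its denominator degree, hence is bounded on $\bbR$; moreover $\Pren$ is a bounded smooth function by Theorem \ref{thm:wave_data}, so that $\fren'(\Pren) \in L^{\infty}(\bbR)$. Consequently $\aone, \anull \in L^{\infty}(\bbR)$. The standard interpolation $\|u'\|_{L^2} \leq \eta \|u''\|_{L^2} + C_{\eta} \|u\|_{L^2}$ valid on $H^2(\bbR)$ shows that $\aone \partial_x$ is $\partial_x^2$-bounded with relative bound $0$, and multiplication by $\anull$ is likewise relatively bounded with relative bound $0$. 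The Kato--Rellich perturbation theorem for closed operators then implies that $\Mren$ is closed on $\calD(\partial_x^2) = H^2(\bbR)$.

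Finally, since $T$ maps $H$ bijectively and isometrically onto $H^2(\bbR)$, closedness of $\Mren$ transports to closedness of $\Lren = T^{-1} \Mren T$ on $H$, and density of $H^2(\bbR)$ in $L^2(\bbR)$ transports to density of $H$ in $L^2(\bbR, w_{\beta})$. No serious obstacle is expected; the only real bookkeeping point is that conjugating by the half-power $w_{\beta}^{1/2}$ (rather than $w_{\beta}$ itself) is precisely what makes $T$ an isometry, and it is exactly this choice of exponent that generates the cross-terms $2\phi'$ and $(\phi')^2 + \phi''$, which combine cleanly into the stated bounded rational coefficients instead of producing unbounded remainders.
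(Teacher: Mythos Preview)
Your proposal is correct and follows essentially the same approach as the paper: the paper also writes $T = e^{\phi}$ for an appropriate $\phi$ (with the opposite sign convention to yours, which is immaterial), computes the same commutator relations, and collects terms to obtain $\aone,\anull$; closedness of $\Mren$ is likewise deduced from $\aone,\anull\in L^{\infty}(\bbR)$ via a Kato--Rellich/interpolation argument (packaged there as Lemma~\ref{lem:closed_lin_op}), and then transported to $\Lren$ by the isometry $T$ (packaged there as Lemma~\ref{lem:conjugate_lin_op}).
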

\begin{proof}
    Via Lemma \ref{lem:conjugate_lin_op}, the dense definition and closedness of $\Lren$ follows from the corresponding properties of $\Mren$, the latter being closed due to $\aone, \anull \in L^{\infty}(\bbR)$ and Lemma \ref{lem:closed_lin_op}.   
    
    In order to compute the coefficients of $\Mren$, write $\phi(x) = \frac{1}{2} \log(w_{\beta}(x))$, so that $T u = e^{\phi} u$ and 
    
    \begin{equation}
        \phi'(x) = - \frac{\beta x}{2(1+x^2)}, \quad \phi''(x) = - \frac{\beta(1 - x^2)}{2(1+x^2)^2},
    \end{equation}
    
    and the commutator relations read

    \begin{equation}
        T \partial_x T^{-1} = \partial_x - \phi', \quad T \partial_x^2 T^{-1} = \partial_x^2 - 2 \phi' \partial_x - \phi'' + (\phi')^2.
    \end{equation}

    Plugging this into the definition of $\Mren$ yields

    \begin{align}
        \Mren &= T \Lren T^{-1} \\
        &= T \left( \partial_x^2 + \sren \partial_x + \fren'(\Pren) \right) T^{-1} \\
        &= T \partial_x^2 T^{-1} + \sren T \partial_x T^{-1} + \fren'(\Pren)\\
        &= \partial_x^2 - 2 \phi' \partial_x - \phi'' + (\phi')^2 + \sren \left( \partial_x - \phi' \right) + \fren'(\Pren) \\
        &= \partial_x^2 + \left( \sren - 2 \phi' \right) \partial_x + \left( (\phi')^2 - \phi'' - \sren \phi' + \fren'(\Pren) \right) \\
        &= \partial_x^2 + \left( \sren + \frac{\beta x}{1+x^2} \right) \partial_x + \left( \left( \frac{\beta x}{2(1+x^2)} \right)^2 + \frac{\beta (1 - x^2)}{2(1+x^2)^2} + \sren \frac{\beta x}{2(1+x^2)} + \fren'(\Pren) \right) \\
        &= \partial_x^2 + \left( \sren + \frac{\beta x}{1+x^2} \right) \partial_x + \left( \sren \frac{\beta x}{2(1+x^2)} + \frac{(\beta^2 - 2 \beta)x^2 + 2 \beta}{4(1+x^2)^2} + \fren'(\Pren) \right),
    \end{align}
    
    which finishes the proof.
\end{proof}

\paragraph{Decomposition of the spectrum}

We follow the definition of \cite[Ch. IV, §5]{katoPerturbationTheoryLinear1995}, according to which a closed linear operator $\sL$ on a Banach space $X$ is Fredholm if its range is closed and both its kernel and cokernel are finite-dimensional. We denote the Fredholm index by $\ind(\cdot)$ and split the spectrum of $\sL$ into the following three parts: 

\begin{align}
    \sigma_{\not F}(\sL) &:= \{ \lambda \in \bbC : \sL - \lambda I \text{ is not Fredholm}\}, \label{eq:sigma_not_F}\\
    \sigma_{k}(\sL) &:= \{ \lambda \in \bbC : \sL - \lambda I \text{ is Fredholm and } \ind(\sL - \lambda I) \neq k \}, \quad k \geq 1, \label{eq:sigma_k} \\
    \sigma_{0}(\sL) &:= \{ \lambda \in \bbC : \sL - \lambda I \text{ is not invertible, Fredholm and } \ind(\sL - \lambda I) = 0 \}. \label{eq:sigma_0}
\end{align}

The three sets are clearly disjoint. To see that they comprise the entire spectrum of $\sL$, recall that invertibility of a closed linear operator implies it being Fredholm with index $0$. Multiple (in general) non-equivalent definitions of the essential spectrum exist in the literature - see \cite{gustafsonEssentialSpectrum1969, jeribiGustafsonWeidmannKato2011}. We follow the definition of what is called the \emph{Schechter} essential spectrum $\sigma_{e5}$ in \cite[Sec. 2]{jeribiGustafsonWeidmannKato2011},

\begin{equation}
    \sigma_{\ess}(\sL) := \sigma_{\not F}(\sL) \cup \bigcup_{k \geq 1} \sigma_{k}(\sL), \label{eq:sigma_ess}
\end{equation}

which is the standard choice in the analysis of travelling waves - see e.g. \cite[Def. 2.2.3]{kapitulaSpectralDynamicalStability2013}, \cite[Def. 3.2]{sandstedeChapter18Stability2002}. This definition has the advantage that 1) Weyl's theorem \cite[Thm. VI.5.35.]{katoPerturbationTheoryLinear1995} on the invariance of the essential spectrum under relatively compact perturbations holds true,\footnote{\cite[Thm. IV.5.26.]{katoPerturbationTheoryLinear1995} ensures that relatively compact perturbations preserve not only the Fredholm property, but also the Fredholm index of a closed linear operator.} and 2) with this definition $\sigma_{\ess}$ is large enough to ensure that the complement is contained in the set of eigenvalues. 


\subsection{Essential spectrum} \label{subsec:essential_spectrum}
The following section is concerned with Theorem \ref{thm:sigma_ess}, the proof of which will be presented on p.~\pageref{page:essential}, after establishing the necessary intermediate results.

\begin{thm} \label{thm:sigma_ess}
    Let $\Lren$ be the closed linear operator defined in \eqref{eq:Lren} and Lemma \ref{lem:Lren} on $L^2(\bbR, w_{\beta})$ with domain $H$. There exists $\varepsilon_0 > 0$ such that

    \begin{equation}
        \sigma_{\ess}(\Lren) \cap \{ z \in \bbC : \Real(z) \gtrsim \varepsilon^{-2} + \calO(\varepsilon^{-1}) \} = \emptyset, \quad \varepsilon \in B'(\varepsilon_0) \cap (0, \infty).\label{eq:sigma_ess_thm}
    \end{equation}
\end{thm}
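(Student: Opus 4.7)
The plan is to exploit the $T$-conjugation of Lemma~\ref{lem:Lren}: since $\Lren$ and $\Mren$ are similar as closed operators they share their spectrum, so it suffices to prove the analogous bound for $\Mren$ on the unweighted space $L^2(\bbR)$. Inspecting the coefficients \eqref{eq:Mren}, every rational correction term vanishes as $|x| \to \infty$ and Theorem~\ref{thm:wave_data} gives $\Pren(x) \to \zpm$, so $\Mren$ is asymptotically constant with $\aone(x) \to \sren$ and $\anull(x) \to \fren'(\zpm)$ at $\pm\infty$. Using $\fren'(u) = -3u^2 + 2(1+\alpha)u + 3\varepsilon^{-2} - \alpha$ together with the Laurent expansions of Lemma~\ref{lem:three_real_solutions}, a direct computation shows that the leading $\pm\sqrt{3}\varepsilon^{-1}$ contributions cancel and leave $\fren'(\zpm) = -6\varepsilon^{-2} + \calO(1)$; this is the arithmetic content behind the spectral gap.

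Next, I would write $\Mren = \Minfty + \sK$, where $\Minfty$ has coefficients given by a fixed smooth interpolation between the asymptotic values of $\aone, \anull$ across a compact transition region, and $\sK := \Mren - \Minfty$ is a first-order differential operator whose zeroth- and first-order coefficients decay to $0$ at $\pm\infty$. A standard cutoff plus Rellich argument---decomposing $\sK \calR(\Minfty, z) = \chi_R \sK \calR(\Minfty, z) + (1 - \chi_R) \sK \calR(\Minfty, z)$, using compactness of $H^1([-R,R]) \hookrightarrow L^2([-R,R])$ on the first piece and operator-norm smallness of the second as $R \to \infty$---shows that $\sK$ is $\Minfty$-compact. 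Weyl's theorem in the form \cite[Thm.~IV.5.26 and Thm.~VI.5.35]{katoPerturbationTheoryLinear1995} then yields $\sigma_\ess(\Mren) = \sigma_\ess(\Minfty)$, with Fredholm indices preserved.

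To compute $\sigma_\ess(\Minfty)$ I would invoke the Fredholm--exponential-dichotomy characterization for asymptotically constant differential operators (see e.g.\ \cite[Sec.~3.1]{kapitulaSpectralDynamicalStability2013}, \cite[Sec.~3.1]{sandstedeChapter18Stability2002}): $\Minfty - \lambda I$ fails to be Fredholm exactly when one of the two asymptotic coefficient matrices
\[
A_\pm(\lambda) = \begin{pmatrix} 0 & 1 \\ \lambda - \fren'(\zpm) & -\sren \end{pmatrix}
\]
has a purely imaginary eigenvalue. Substituting $\mu = i\tau$ in $\mu^2 + \sren \mu + (\fren'(\zpm) - \lambda) = 0$ produces the two dispersion curves $\lambda = -\tau^2 + i\sren\tau + \fren'(\zpm)$, which are leftward-opening parabolas with real-axis vertices at $-6\varepsilon^{-2} + \calO(1)$. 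Any $\lambda$ strictly to the right of both vertices is hyperbolic for both $A_\pm(\lambda)$, so $\Minfty - \lambda I$ is Fredholm there, and its index vanishes because, for real $\lambda$ larger than both vertices, the characteristic equation has exactly one positive and one negative real root, giving matching Morse indices at $\pm\infty$; analytic Fredholm continuation then propagates $\ind(\Minfty - \lambda I) = 0$ throughout the connected region.

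The main obstacle I expect is the relative compactness of $\sK$. Unlike the textbook situation of a multiplicative perturbation, $\sK$ carries a genuine first-order differential term $(\aone - \sren)\partial_x$ inherited from the weight conjugation, so one has to exploit that $\calR(\Minfty, z)$ maps $L^2(\bbR)$ into $H^2(\bbR)$ and carefully combine this $H^2$-regularity with the $\calO(|x|^{-1})$ decay of $\aone - \sren$ outside $[-R, R]$, all while noting that $\Minfty$ is not self-adjoint so that a reference resolvent estimate for the constant-coefficient operator $\partial_x^2 + \sren\partial_x$ (say via Fourier) is needed to make the resolvent available in the first place. The remaining parabola computation and index check are largely mechanical given Lemmas~\ref{lem:three_real_solutions}--\ref{lem:Lren} and the quantitative asymptotics computed above.
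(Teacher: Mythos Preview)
Your proposal is correct and follows essentially the same route as the paper: conjugate to $\Mren$ via Lemma~\ref{lem:Lren}, split off an asymptotically constant operator $\Minfty$ plus a relatively compact remainder $\sK$ (the paper packages your cutoff/Rellich argument as Lemma~\ref{lem:compact_multiplication}, applied to both the zeroth-order coefficient and to $(\aone-\aone^\infty)\partial_x$ viewed as $H^2 \to H^1 \to L^2$), invoke Weyl's theorem, and then locate the Fredholm borders of $\Minfty$ via the dispersion curves of the asymptotic matrices together with a Morse-index check at a large real $\lambda$. Your computation $\fren'(\zpm) = -6\varepsilon^{-2} + \calO(1)$ is in fact slightly sharper than the $\calO(\varepsilon^{-1})$ remainder the paper records, and your anticipated obstacle with the first-order term in $\sK$ is exactly what the paper resolves by factoring $\sK\calR(\Minfty,\lambda)$ through the bounded embedding $H^2(\bbR)\hookrightarrow H^1(\bbR)$ before applying the compact-multiplication lemma.
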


As is common practice for this type of problem, we will analyse the essential spectrum in two steps. Firstly, we will decompose the operator $\Mren$ as the sum of a piecewise constant operator 

\begin{equation}
    \Minfty = \partial_x^2 + \aone^{\infty}(x) \partial_x + \anull^{\infty}(x) 
\end{equation}

with 

\begin{equation}
    \ak^{\infty}(x) := \begin{cases}
        \ak^+, & x < -1, \\
        \ak^-, & x > 1, \\
        \text{smooth} & \text{else},
    \end{cases} \qquad \ak^{\pm} := \lim_{x \rightarrow \pm \infty} \ak(x),
\end{equation}

for $k = 0,1$, and a relatively compact perturbation $\Kren := \Mren - \Minfty$. Via Weyl's theorem on the invariance of the essential spectrum \cite[Thm. IV.5.35.]{katoPerturbationTheoryLinear1995}, the essential spectrum of $\Mren$ coincides with that of $\Minfty$. Then, in a second step, we will study the essential spectrum of $\Minfty$ by analysing the associated ODEs on the half-lines $\bbR_- := (-\infty, 0]$ and $\bbR_+ := [0, \infty)$ and their exponential dichotomies - see also \cite[Ch. 3]{kapitulaSpectralDynamicalStability2013}, \cite[Ch. 3.3]{sandstedeChapter18Stability2002}. We deviate slightly from \cite[Ch. 3]{kapitulaSpectralDynamicalStability2013} in the fact that the coefficients of $\Mren$ are not \emph{exponentially} asymptotically constant, but only have algebraic decay to their limits as $x \rightarrow \pm \infty$. This is a consequence of the \emph{polynomial} weight we chose and prohibits the direct use of the results in \cite[Ch. 3]{kapitulaSpectralDynamicalStability2013}.

\begin{rem}
    The asymptotic results of this paper hold true (and the proofs are actually simpler) if we consider a smooth exponential weight 

    \begin{equation}
        \tilde{w}_{\beta}(x) = e^{- \beta \vert x \vert}, \quad \beta > 0, \vert x \vert > 1,
    \end{equation}

    instead of the polynomial weight defined in Equation \eqref{eq:weight}. This would produce coefficients $\aone, \anull$ which are constant outside of $[-1, 1]$. However, again with the view towards application to travelling wave problems in the stochastic Allen--Cahn equation on $\bbR^2$, we stick to the weights used in \cite{mourratGlobalWellposednessDynamic2017}.
\end{rem}

\subsubsection{Relatively compact perturbation}

\begin{lem} \label{lem:rel_compact_pert}
    Let $\varepsilon_0 > 0$ be as in Lemma \ref{lem:three_real_solutions} and let $\varepsilon \in B'(\varepsilon_0) \cap (0, \infty)$. The operator $\Kren := \Mren - \Minfty$, given by

    \begin{equation}
        \Kren = \left( \aone(x) - \aone^{\infty}(x) \right) \partial_x + \left( \anull(x) - \anull^{\infty}(x) \right),
    \end{equation}

    is a relatively compact perturbation of the closed linear operator $\Minfty$ on $L^2(\bbR)$ with domain $H^2(\bbR)$ as defined in Lemma \ref{lem:Lren}.
\end{lem}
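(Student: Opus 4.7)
The plan is to establish relative compactness by showing that $\Kren$ is compact as a map from $\calD(\Minfty) = H^2(\bbR)$, equipped with the graph norm of $\Minfty$, into $L^2(\bbR)$. Because $\aone^\infty, \anull^\infty \in L^\infty(\bbR)$, the graph norm of $\Minfty$ is equivalent to the usual $H^2$-norm: one direction is immediate from $\Minfty$ being second order with bounded coefficients, while the reverse bound follows from $\partial_x^2 u = \Minfty u - \aone^\infty \partial_x u - \anull^\infty u$ combined with the interpolation inequality $\|u'\|_{L^2}^2 \lesssim \|u\|_{L^2}\|u''\|_{L^2}$. Hence it suffices to prove compactness of $\Kren : H^2(\bbR) \to L^2(\bbR)$ with the standard $H^2$-norm.

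Writing $\Kren = p(x)\partial_x + q(x)$ with $p := \aone - \aone^\infty$ and $q := \anull - \anull^\infty$, the next step is to verify that $p$ and $q$ are continuous on $\bbR$ and vanish as $|x| \to \infty$. For $|x| > 1$, inspection of \eqref{eq:Mren} gives $p(x) = \tfrac{\beta x}{1+x^2} = \calO(|x|^{-1})$ and $q(x) = \sren \tfrac{\beta x}{2(1+x^2)} + \tfrac{(\beta^2 - 2\beta)x^2 + 2\beta}{4(1+x^2)^2} + \bigl(\fren'(\Pren(x)) - \fren'(\zpm)\bigr)$, whose first two summands decay algebraically while the last decays exponentially by virtue of the tanh profile of $\Pren$ furnished by Theorem~\ref{thm:wave_data}. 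On the transition region $[-1,1]$, continuity of $p$ and $q$ is manifest, so $p, q \in C_0(\bbR)$.

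The final step is to invoke the classical fact that multiplication by $m \in C_0(\bbR)$ defines a compact operator $H^1(\bbR) \to L^2(\bbR)$. I would prove this by approximating $m$ uniformly by $m_R := \chi_R m$, where $\chi_R$ is a smooth cutoff supported in $[-R-1, R+1]$ and equal to $1$ on $[-R,R]$: each $M_R : u \mapsto m_R u$ factors through the compact embedding $H^1([-R-1, R+1]) \hookrightarrow L^2([-R-1, R+1])$ provided by Rellich--Kondrachov, hence is compact; meanwhile $\|M - M_R\|_{H^1 \to L^2} \leq \sup_{|x|>R}|m(x)| \to 0$, so $M$ is the norm limit of compact operators and itself compact. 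Applied to $q$ via the factorization $H^2 \hookrightarrow H^1 \xrightarrow{q\,\cdot} L^2$ and to $p$ via $H^2 \xrightarrow{\partial_x} H^1 \xrightarrow{p\,\cdot} L^2$, this yields compactness of $\Kren$ and hence its relative compactness with respect to $\Minfty$. No genuine obstacle is anticipated; the only subtlety is confirming the algebraic decay of $p, q$, which is where the choice of polynomial weight \eqref{eq:weight} (as opposed to the exponential weight mentioned in the preceding remark) is felt—but mere vanishing at infinity is enough for the argument to go through.
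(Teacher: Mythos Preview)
Your proof is correct and follows essentially the same strategy as the paper: both reduce to showing that multiplication by a bounded function vanishing at infinity is compact from $H^1(\bbR)$ to $L^2(\bbR)$, then factor $\Kren$ through $H^1$. The only cosmetic difference is that the paper frames relative compactness via the resolvent (showing $\Kren(\Minfty - \lambda I)^{-1}$ is compact, after noting $\rho(\Minfty)\neq\emptyset$) and packages the compact-multiplication fact as a separate lemma proved by subsequence extraction, whereas you use the equivalent graph-norm formulation and prove the same fact inline via norm approximation by cutoffs.
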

\begin{proof}
    The fact that $\Minfty$ with domain $H^2(\bbR))$ is a closed linear operator on $L^2(\bbR)$ follows from Lemma \ref{lem:closed_lin_op}, which also shows that the resolvent set of $\Minfty$ is non-empty. To show compactness of $\Kren$ relative to $\Minfty$ it is thus enough to show that $\Kren (\Minfty - \lambda I)^{-1}: L^2(\bbR) \rightarrow L^2(\bbR)$ is compact for some $\lambda \in \rho(\Minfty)$. We factor the operator as

    \begin{equation}
        L^2(\bbR) \xrightarrow{(\Minfty - \lambda I)^{-1}} H^2(\bbR) \xrightarrow{\iota} H^1(\bbR) \xrightarrow{\Kren} L^2(\bbR), \label{eq:factoring}
    \end{equation}

    where $\iota$ denotes the canonical embedding. Because $\ak - \ak^{\infty} \in L^{\infty}(\bbR)$ with $\lim_{R \rightarrow \infty} \sup_{\vert x \vert > R} \vert \ak(x) - \ak^{\infty}(x) \vert \rightarrow 0$, Lemma \ref{lem:compact_multiplication} shows that $\Kren: H^1(\bbR) \rightarrow L^2(\bbR)$ is compact. Since the first two operators in \eqref{eq:factoring} are bounded and the last is compact, the product of the three is compact, which concludes the proof.
\end{proof}

\subsubsection{Fredholm borders and essential spectrum}

Let $A \in \bbC^{n \times n}$ be hyperbolic; i.e. $\sigma(A) \cap \ii \bbR = \emptyset$. Then define its Morse index $i(A)$ as the sum of the geometric multiplicities of the eigenvalues of $A$ with positive real part; i.e. the dimension of its generalized unstable subspace.

\begin{lem} \label{lem:Fred_ODE}
    Let $\varepsilon_0 > 0$ be as in Lemma \ref{lem:three_real_solutions}, let $\varepsilon \in B'(\varepsilon_0) \cap (0, \infty)$, and for any $\lambda \in \bbC$ define the matrices

    \begin{equation}
        \Asymppm(\lambda) := \begin{pmatrix}
            0 & 1 \\
            \lambda - \anull^{\pm}& - \aone^{\pm}
        \end{pmatrix}.
    \end{equation}
    
    Then for every $k \geq 0$,

    \begin{enumerate}
        \item $\lambda \in \sigma_{\not F}(\Minfty)$ if and only if $\Asympminus(\lambda)$ or $\Asympplus(\lambda)$ are not hyperbolic. 
        \item $\lambda \in \sigma_{k}(\Minfty)$ if and only if $\Asympminus(\lambda)$ and $\Asympplus(\lambda)$ are hyperbolic and $i(\Asympminus(\lambda)) - i(\Asympplus(\lambda)) = k$.
    \end{enumerate}
\end{lem}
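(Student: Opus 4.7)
The plan is to reduce the Fredholm analysis of the scalar second-order operator $\Minfty - \lambda I$ to the classical exponential-dichotomy theory for first-order linear ODE systems, exploiting the fact that $\Minfty$ has \emph{exactly} constant coefficients outside $[-1,1]$. First I would rewrite $(\Minfty - \lambda I) u = f$ as the first-order system
\begin{equation}
    U' = A(x, \lambda) U + \begin{pmatrix} 0 \\ f \end{pmatrix}, \quad U := \begin{pmatrix} u \\ u' \end{pmatrix}, \quad A(x, \lambda) := \begin{pmatrix} 0 & 1 \\ \lambda - \anull^{\infty}(x) & - \aone^{\infty}(x) \end{pmatrix},
\end{equation}
so that by construction $A(x, \lambda) = \Asymppm(\lambda)$ on $\{\pm x > 1\}$ and the eigenvalues of $\Asymppm(\lambda)$ are precisely the characteristic exponents governing the asymptotic behaviour of solutions of $(\Minfty - \lambda I) u = 0$ at $\pm \infty$.

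Next, I would transfer the Fredholm property and Fredholm index of $\Minfty - \lambda I : H^2(\bbR) \to L^2(\bbR)$ to those of the first-order operator $T_{\lambda} := \partial_x - A(\cdot, \lambda)$ on appropriate Sobolev spaces, via the embedding $u \mapsto (u, u')^{\top}$ identifying $H^2(\bbR)$ with a closed subspace of $H^1(\bbR; \bbC^2)$; a routine diagram chase shows that this identification preserves both Fredholmness and index. For $T_\lambda$, Palmer's Fredholm dichotomy theorem (see e.g.\ \cite[Ch.~3]{kapitulaSpectralDynamicalStability2013}, \cite[Ch.~3.3]{sandstedeChapter18Stability2002}) states that $T_\lambda$ is Fredholm if and only if $U' = A(x, \lambda) U$ admits exponential dichotomies on both half-lines $\bbR_\pm$, in which case $\ind(T_\lambda) = i(\Asympminus(\lambda)) - i(\Asympplus(\lambda))$. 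Because the coefficient matrix is literally constant on $\{|x|>1\}$, the existence of these dichotomies is equivalent to the hyperbolicity of $\Asymppm(\lambda)$, with dichotomy subspaces given directly by the stable/unstable subspaces of the respective matrix. This proves assertion (2) and the ``only if'' direction of (1).

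For the converse in (1), I would argue by constructing a singular Weyl sequence. If $\ii \omega$ is a purely imaginary eigenvalue of, say, $\Asympplus(\lambda)$, then $x \mapsto e^{\ii \omega x}$ solves the homogeneous equation on $[1, \infty)$ without decaying. Choosing $\chi \in C_c^{\infty}((2, \infty))$ and setting $u_n(x) := n^{-1/2} \chi(x/n) e^{\ii \omega x}$ produces a bounded sequence in $H^2(\bbR)$ with $\|u_n\|_{L^2}$ bounded below, supports escaping to $+\infty$ (so $u_n \rightharpoonup 0$ weakly in $L^2(\bbR)$), while $\|(\Minfty - \lambda I) u_n\|_{L^2} = \calO(n^{-1}) \to 0$, since the constant-coefficient operator annihilates the leading mode $e^{\ii \omega x}$ and all remaining terms involve derivatives of $\chi(\cdot / n)$. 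Such a singular sequence precludes $\Minfty - \lambda I$ from being semi-Fredholm, and hence in particular from being Fredholm.

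The main technical burden is not a conceptual obstacle but careful bookkeeping: verifying that the scalar-to-system identification preserves the index, and that the piecewise constant structure of $\Minfty$ really reduces Palmer's general dichotomy hypothesis to the simple hyperbolicity condition on $\Asymppm(\lambda)$. The potential subtlety coming from the merely algebraic decay of the weight $w_\beta$ does not enter here, as it has been isolated to the relatively compact perturbation step handled by Lemma \ref{lem:rel_compact_pert}.
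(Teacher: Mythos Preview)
Your proposal is correct and follows essentially the same route as the paper: rewrite the eigenvalue problem as a first-order system, invoke Palmer's exponential-dichotomy Fredholm theory (via the Sandstede/Kapitula--Promislow references), and use that the coefficients are constant on $\{|x|>1\}$ to reduce the dichotomy condition on each half-line to hyperbolicity of $\Asymppm(\lambda)$. The paper's proof is terser---it simply cites \cite[Thm.~3.1, Thm.~3.2, Rem.~3.3]{sandstedeChapter18Stability2002} for both directions and both assertions---whereas you spell out the scalar-to-system identification and add an explicit Weyl-sequence construction for the ``if'' direction of (1); note, however, that this last step is redundant, since Palmer's theorem is already an if-and-only-if and hyperbolicity of a constant matrix is equivalent to the existence of a dichotomy for the corresponding autonomous system, so the Weyl sequence, while correct, is not needed.
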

\begin{proof}
    Since the coefficients of $\Minfty$ are smooth and real-valued (cf. \cite[Hyp. 3.1]{sandstedeChapter18Stability2002}), a given $\lambda \in \bbC$ satisfies $\lambda \in \sigma_{\not F}(\Minfty)$ if and only if the $2$-dimensional ODE 

    \begin{equation}
        \begin{pmatrix}
            u \\ u'
        \end{pmatrix}' = \begin{pmatrix}
            0 & 1 \\
            \lambda - \anull^{\infty}(x) & - \aone^{\infty}(x)
        \end{pmatrix} \begin{pmatrix}
            u \\ u'
        \end{pmatrix}, \label{eq:ODE}
    \end{equation}

    does not have an exponential dichotomy on $\bbR_-$ or on $\bbR_+$ - see \cite[Thm. 3.2 \& Rem. 3.3]{sandstedeChapter18Stability2002} and also \cite{palmerExponentialDichotomiesTransversal1984, palmerExponentialDichotomiesFredholm1988}. Given that on $\bbR_-$ and $\bbR_+$, respectively, $\ak^{\infty}(x) = \ak^{\pm}$ outside of a compact interval, \cite[Thm. 3.1]{sandstedeChapter18Stability2002} shows that the existence of an exponential dichotomy of \eqref{eq:ODE} on each interval is equivalent to $\Asympminus(\lambda)$ or $\Asympplus(\lambda)$ not being hyperbolic. This shows the first claim.

    Similarly, \cite[Thm. 3.2 \& Rem. 3.3]{sandstedeChapter18Stability2002} shows that $\lambda \in \sigma_k(\Minfty)$ if and only if the ODE \eqref{eq:ODE} has an exponential dichotomy on both $\bbR_-$ and $\bbR_+$ and the difference of the dimension of the associated unstable subspaces equals $k$. But the same argument as above this is equivalent to the Morse indices of $\Asymppm(\lambda)$ satisfying $i(\Asympminus(\lambda)) - i(\Asympplus(\lambda)) = k$. This shows the second claim.
\end{proof}

\begin{rem}
    The operator $\Minfty$ is what is called the asymptotic operator in \cite[Ch. 3]{kapitulaSpectralDynamicalStability2013}. However, we keep the coefficients smooth in order to be able to apply the results in \cite{sandstedeChapter18Stability2002} directly (cf. \cite[Hyp. 3.1]{sandstedeChapter18Stability2002}). The matrices $\Asymppm(\lambda)$ correspond exactly to what is called the asymptotic matrices associated to the asymptotic operator in \cite[Ch. 3]{kapitulaSpectralDynamicalStability2013}.
\end{rem}

\begin{lem} \label{lem:ess_spectrum_M_infty}
    Let $\varepsilon_0 > 0$ be as in Lemma \ref{lem:three_real_solutions} and let $\varepsilon \in B'(\varepsilon_0) \cap (0, \infty)$. Then

    \begin{equation}
        \sigma_{\ess}(\Minfty) \subseteq \left\{ \lambda \in \bbC : \Real(\lambda) \leq - 6 \varepsilon^{-2} + \calO(\varepsilon^{-1}) \right\}.
    \end{equation}
\end{lem}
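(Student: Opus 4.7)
}

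The strategy is to apply Lemma \ref{lem:Fred_ODE}: I need to locate the set of $\lambda$ for which $\Asympminus(\lambda)$ or $\Asympplus(\lambda)$ fails to be hyperbolic (the Fredholm borders), and then determine the Morse indices in the connected component of the complement that extends to $\Real(\lambda) \to +\infty$. First I would compute the asymptotic coefficients. Since $\frac{\beta x}{1+x^2} \to 0$ and $\frac{(\beta^2-2\beta)x^2+2\beta}{4(1+x^2)^2} \to 0$ as $x \to \pm\infty$, the weight-dependent pieces vanish in the limit and
\begin{equation}
\aone^{\pm} = \sren = \calO(\varepsilon^2), \qquad \anull^{\pm} = \fren'(\zpm).
\end{equation}
Plugging the Laurent expansion $\zpm = \pm\sqrt{3}\varepsilon^{-1} + \tfrac{1+\alpha}{3} + \calO(\varepsilon)$ from Lemma \ref{lem:three_real_solutions} into $\fren'(u) = -3u^2 + 2(1+\alpha)u + (3\varepsilon^{-2}-\alpha)$, the $\varepsilon^{-1}$ contributions from $-3\zpm^2$ and $2(1+\alpha)\zpm$ cancel exactly, yielding $\anull^{\pm} = -6\varepsilon^{-2} + \calO(1)$.

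Next I identify the Fredholm borders. The characteristic polynomial of $\Asymppm(\lambda)$ is $\mu^2 + \aone^{\pm}\mu + (\anull^{\pm} - \lambda)$, so the matrix is non-hyperbolic iff $\mu = i\omega$ is a root for some $\omega \in \bbR$, equivalently iff
\begin{equation}
\lambda \in \Sigma^{\pm} := \{\anull^{\pm} - \omega^2 + i\omega\,\aone^{\pm} : \omega \in \bbR\}.
\end{equation}
Each $\Sigma^{\pm}$ is a leftward-opening parabola with rightmost point $\anull^{\pm}$, so by the computation above both curves lie in $\{\Real(\lambda) \leq -6\varepsilon^{-2} + \calO(1)\}$. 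By Lemma \ref{lem:Fred_ODE}(1), this locates $\sigma_{\not F}(\Minfty)$.

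For the Morse index analysis, I would take $\lambda \in \bbR$ with $\lambda > \max(\anull^+, \anull^-)$: then the constant term $\anull^{\pm} - \lambda$ of the characteristic polynomial is strictly negative, hence each $\Asymppm(\lambda)$ has two real eigenvalues of opposite sign and $i(\Asymppm(\lambda)) = 1$. Since the Morse index is integer-valued and locally constant on $\bbC \setminus (\Sigma^- \cup \Sigma^+)$, it remains equal to $1$ on the entire unbounded connected component containing this real half-line. By Lemma \ref{lem:Fred_ODE}(2), this component is thus disjoint from $\bigcup_{k \geq 1}\sigma_k(\Minfty)$, and combining with \eqref{eq:sigma_ess} yields
\begin{equation}
\sigma_{\ess}(\Minfty) \subseteq \Sigma^- \cup \Sigma^+ \cup \{\text{other components}\} \subseteq \{\Real(\lambda) \leq -6\varepsilon^{-2} + \calO(1)\},
\end{equation}
which is contained in the region stated in the lemma.

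I do not anticipate a serious obstacle: the argument is a textbook Fredholm-border analysis once Lemma \ref{lem:Fred_ODE} is in hand. The only subtle point is the cancellation at order $\varepsilon^{-1}$ in the expansion of $\fren'(\zpm)$, which is responsible for the leading $-6\varepsilon^{-2}$ coefficient and hence for the size of the spectral gap; the mild subtlety with the looser $\calO(\varepsilon^{-1})$ remainder in the statement is harmless since $\calO(1) \subset \calO(\varepsilon^{-1})$ as $\varepsilon \downarrow 0$.
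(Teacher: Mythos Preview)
Your proposal is correct and follows essentially the same route as the paper's proof: both parametrize the Fredholm borders $\sigma_{\not F}(\Minfty)$ via Lemma \ref{lem:Fred_ODE} as the leftward-opening parabolas $\lambda = \anull^{\pm} - \omega^2 + i\omega\,\aone^{\pm}$, then use the negativity of $\det(\Asymppm(\lambda)) = \anull^{\pm} - \lambda$ for large real $\lambda$ to conclude $i(\Asympminus) = i(\Asympplus) = 1$ on the rightmost component, and finally expand $\anull^{\pm} = \fren'(\zpm)$ via Lemma \ref{lem:three_real_solutions}. Your observation that the $\varepsilon^{-1}$ terms cancel in $\fren'(\zpm)$, leaving $\anull^{\pm} = -6\varepsilon^{-2} + \calO(1)$, is exactly the computation the paper summarizes as ``direct substitution of the Laurent series.''
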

\begin{proof}
    Firstly, by Lemma \ref{lem:Fred_ODE}, the set $\sigma_{\not F}(\Minfty)$ coincides with 

    \begin{equation}
         \{ \lambda \in \bbC : \Asympminus(\lambda) \text{ or } \Asympplus(\lambda) \text{ is not hyperbolic} \}, \label{eq:fredholm_border}
    \end{equation}

    which in turn may be characterized as the set of those $\lambda \in \bbC$ for which the characteristic polynomial of $\Asympminus(\lambda)$ or of $\Asympplus(\lambda)$ has a purely imaginary root; i.e. 

    \begin{equation}
        \det(\Asymppm(\lambda) - (\ii \xi) I) = 0, \quad \text{for some } \xi \in \bbR.
    \end{equation}

    We may thus parametrize $\sigma_{\not F}(\Minfty)$ by the two curves
    
    \begin{equation}
        \Gamma^{\pm}: \bbR \rightarrow \bbC; \quad \xi \mapsto (\ii \xi)^2 + \aone^{\pm} \ii \xi + \anull^{\pm}.
    \end{equation}

    In particular, the point of $\sigma_{\not F}(\Minfty)$ with largest real part is attained at $\xi = 0$ and given by $\max \{\anull^-, \anull^+\}$.

    Secondly, by \cite[Ch. IV, §5]{katoPerturbationTheoryLinear1995}, the set 

    \begin{equation}
        \bigcup_{k \geq 0} \sigma_k(\Minfty) \label{eq:union}
    \end{equation}

    is the disjoint union of a countable number of open and connected sets on which $\lambda \mapsto \ind (\Minfty - \lambda I)$ is constant, and the boundary of each is contained in $\sigma_{\not F}(\Minfty)$. Therefore, by the first point above, \eqref{eq:union} is comprised of at most three such sets - see Figure \ref{fig:essential_spectrum}: left (cross-hatched), middle (linearly hatched), right (blank). In order to rule out that the right (blank) region is part of $\sigma_{\ess}(\Minfty)$, note that for $\lambda$ real and large enough we have 

    \begin{equation}
        \det(\Asymppm(\lambda)) = \anull^{\pm} - \lambda < 0. \label{eq:det_negative}
    \end{equation}

    Since $\lambda \in \bbR$, the matrices $\Asymppm(\lambda)$ are of dimension $2 \times 2$ with real entries, and hence Equation \eqref{eq:det_negative} implies that both $\Asymppm(\lambda)$ have one positive and one negative eigenvalue; i.e. $i(A^-(\lambda)) = i(A^+(\lambda)) = 1$. Thus, by Lemma \ref{lem:Fred_ODE}, $\lambda \notin \sigma_{\ess}(\Minfty)$ for some $\lambda$ real and large enough and thus for the entire right (blank) region. Therefore,

    \begin{equation}
        \sigma_{\ess}(\Minfty) \subseteq \left\{ \lambda \in \bbC : \Real(\lambda) \leq \max \{\anull^-, \anull^+\} \right\}.
    \end{equation}
    
    For the asymptotic expansion, notice firstly that the only summand in the definition of $\anull^{\pm}$ that does not vanish at $\pm \infty$ is $\fren'(\Pren(x))$, and thus
    
    \begin{equation}
        \anull^{\pm} = \lim_{x \rightarrow \pm \infty} \fren'(\Pren(x)). \label{eq:asymptotic_a0}
    \end{equation}

    Combining $\lim_{x \rightarrow \pm \infty} \Pren(x) = \zpm$ from Theorem \ref{thm:wave_data} with the continuity of $\fren'$ to pull the limit inside, we obtain

    \begin{equation}
        \anull^{\pm} = \fren'(\zpm) = - 3 (\zpm)^2 + 2 (1 + \alpha) \zpm + 3 \varepsilon^{-2} - \alpha. \label{eq:a0_in_terms_of_z}
    \end{equation}

    The expansion now follows from direct substitution of the Laurent series from Lemma \ref{lem:three_real_solutions}.
\end{proof}

\begin{figure}[htbp]
    \centering
    \includegraphics[width=0.6\textwidth]{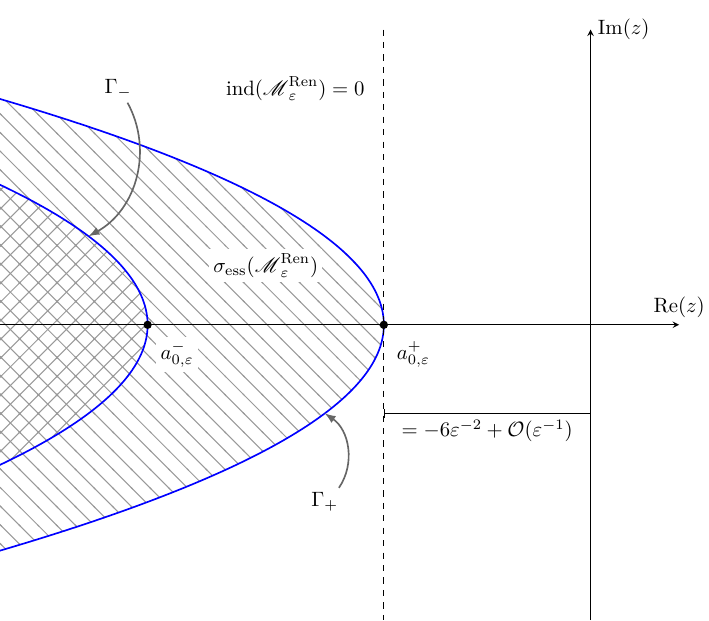} 
    \caption{Essential spectrum of $\Minfty$.}
    \label{fig:essential_spectrum}
\end{figure}

\begin{proof}[Proof of Theorem \ref{thm:sigma_ess}] 
\label{page:essential}
    By Lemma \ref{lem:conjugate_lin_op}, conjugation preserves the decomposition of the spectrum into \eqref{eq:sigma_not_F}, \eqref{eq:sigma_k}, and \eqref{eq:sigma_0}, and hence $\sigma_{\ess}(\Lren) = \sigma_{\ess}(\Mren)$. By Weyl's theorem \cite[Thm. IV.5.35.]{katoPerturbationTheoryLinear1995} and Lemma \ref{lem:rel_compact_pert}, the latter coincides with $\sigma_{\ess}(\Minfty)$, which satisfies \eqref{eq:sigma_ess_thm} by Lemma \ref{lem:ess_spectrum_M_infty}.
\end{proof}


\subsection{Point spectrum} \label{subsec:discrete_spectrum}

The following section is concerned with Theorem \ref{thm:sigma_0}, the proof of which will be presented on p.~\pageref{page:point}, after establishing the necessary intermediate results.

\begin{thm} \label{thm:sigma_0}
   Let $\Lren$ be the closed linear operator defined in \eqref{eq:Lren} and Lemma \ref{lem:Lren} on $L^2(\bbR, w_{\beta})$ with domain $H$. There exists $\varepsilon_0 > 0$ such that

    \begin{equation}
        \sigma_0(\Lren) \cap \{ z \in \bbC : \Real(z) \gtrsim \varepsilon^{-2} + \calO(\varepsilon^{-1}) \} = \{ 0 \}, \quad \forall \varepsilon \in B'(\varepsilon_0) \cap (0, \infty). \label{eq:sigma_0_statement}
    \end{equation}
\end{thm}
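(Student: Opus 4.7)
The plan is to reduce the point spectrum problem for $\Lren$ on the weighted space $L^2(\bbR, w_\beta)$ to a regular perturbation problem for a self-adjoint Schr\"odinger operator on $L^2(\bbR)$, whose $\varepsilon \downarrow 0$ limit is the classical linearization around the symmetric stationary Allen--Cahn front and whose spectral gap is explicit. First observe that $0 \in \sigma_0(\Lren)$: differentiating the travelling-wave identity $\partial_x^2 \Pren + \sren \partial_x \Pren + \fren(\Pren) = 0$ in $x$ gives $\Lren \partial_x \Pren = 0$, and $\partial_x \Pren(x) = \Aren^2 \Phux'(\Aren x)$ lies in $H$ by Theorem \ref{thm:wave_data}. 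The content of the statement is therefore the absence of any further eigenvalue in the strip of interest.

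First I would remove the drift by the formal gauge $u \mapsto e^{-\sren x/2} u$, which conjugates the differential expression of $\Lren$ to $\partial_x^2 - \sren^2/4 + \fren'(\Pren) = -\Hren$, where $\Hren := -\partial_x^2 + V_\varepsilon$ with $V_\varepsilon(x) := \sren^2/4 - \fren'(\Pren(x))$. Using \eqref{eq:a0_in_terms_of_z} together with Lemma \ref{lem:three_real_solutions}, $V_\varepsilon(\pm\infty) = 6\varepsilon^{-2} + \calO(\varepsilon^{-1})$, so $\Hren$ is a semibounded Schr\"odinger operator on $L^2(\bbR)$ whose essential spectrum starts at scale $\varepsilon^{-2}$. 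Any eigenvalue $\lambda$ of $\Lren$ in the strip of interest corresponds to an eigenvalue $-\lambda$ of $\Hren$ strictly below the essential spectrum, and Agmon's exponential decay estimate in the classically forbidden region gives decay of the associated eigenfunction at rate at least $e^{-c\varepsilon^{-1}|x|}$. Because $\sren = \calO(\varepsilon^2)$ makes the gauge factor $e^{\pm \sren x/2}$ grow much slower than this Agmon decay, and because the polynomial weight $w_\beta$ grows slower still, multiplication by $e^{-\sren x/2}$ produces a bijection between eigenvalues of $\Hren$ on $L^2(\bbR)$ below the essential spectrum and eigenvalues of $\Lren$ on $L^2(\bbR, w_\beta)$ in the strip.

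Next I would unitarily dilate via $(U_\varepsilon f)(y) := \Aren^{-1/2} f(y/\Aren)$, producing $\Aren^{-2} U_\varepsilon \Hren U_\varepsilon^{-1} = -\partial_y^2 + \Aren^{-2} V_\varepsilon(y/\Aren)$. Substituting $\Pren(y/\Aren) = \Aren \Phux(y) + \zminus$ into $\fren'$ and expanding in $\varepsilon$ using $\zminus/\Aren = -1/2 + \calO(\varepsilon^2)$, $(\zminus)^2/\Aren^2 = 1/4 + \calO(\varepsilon^2)$, and $1/(\Aren^2 \varepsilon^2) = 1/12 + \calO(\varepsilon^2)$, the apparently singular $\varepsilon^{-1}$ and $\varepsilon^{-2}$ contributions cancel, leaving $\Aren^{-2} V_\varepsilon(y/\Aren) = 3 \Phux(y)^2 - 3 \Phux(y) + 1/2 + \varepsilon \, r_\varepsilon(y)$, where $r_\varepsilon$ is bounded in $y$ and holomorphic in $\varepsilon$ in a small ball around $0$. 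This converts the meromorphic dependence of $\Hren$ on $\varepsilon$ into a genuinely regular perturbation. The leading operator $\mathcal{H}_0 := -\partial_y^2 + 3 \Phux^2 - 3 \Phux + 1/2$ is the classical self-adjoint linearization around the symmetric ($\ahux = 1/2$) stationary Allen--Cahn front of \eqref{eq:huxley}: differentiating the profile ODE at $\ahux = 1/2$ yields $\mathcal{H}_0 \Phux' = 0$ with $\Phux' > 0$, so $\Phux'$ is the ground state, and the essential spectrum equals $[1/2, \infty)$, yielding a spectral gap of $1/2$.

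Finally I would apply Kato's holomorphic perturbation theory for isolated parts of the spectrum to $\mathcal{H}_0 + \varepsilon \, r_\varepsilon$: since $0$ is a simple isolated eigenvalue of $\mathcal{H}_0$ with gap $1/2$, there exists $\varepsilon_0 > 0$ such that for every $\varepsilon \in (0, \varepsilon_0)$ the perturbed rescaled operator has exactly one simple eigenvalue in a fixed neighbourhood of $0$ and no other spectrum in $(-1/4, 1/4)$. Unscaling by the factor $\Aren^2 \simeq 12 \varepsilon^{-2}$ and inverting the gauge transplants this back to $\Lren$ on the weighted space and recovers exactly the strip $\Real(\lambda) > -6\varepsilon^{-2} + \calO(\varepsilon^{-1})$ of the claim; the known eigenfunction $\partial_x \Pren$ then forces the persisting eigenvalue to be exactly $0$ rather than merely $\calO(\varepsilon)$-close to it. The main obstacle will be the rigorous $\varepsilon$-uniform Agmon bijection in the second step, because both the potential $V_\varepsilon$ and the gauge factor depend on $\varepsilon$ and the relevant decay scale is the sharp borderline $\varepsilon^{-1}$; a secondary delicate point is checking, at sufficient order, that every singular term in the dilated potential indeed cancels, so that the perturbation in the final step is holomorphic and not merely meromorphic across $\varepsilon = 0$.
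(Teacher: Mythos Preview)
Your proposal is correct and follows essentially the same route as the paper: gauge away the drift to reduce to a Schr\"odinger operator $\Hren$, justify the transfer of eigenvalues via Agmon decay (the paper's Proposition~\ref{prop:eigenvector_of_Schrödinger}), dilate to a regular family with limit the classical Allen--Cahn linearization, and conclude by Kato's analytic perturbation theory. The only cosmetic differences are that the paper dilates by $\varepsilon^{-1}$ rather than by $\Aren$ (so its limiting potential is $3\Phollim^2-3$ at argument $\sqrt{12}x$ instead of your $3\Phux^2-3\Phux+1/2$, which is the same operator up to a fixed spatial rescaling), and that the paper pins the eigenvalue at $0$ by invoking the Sturm ground-state criterion on $\rho_\varepsilon\Pren'>0$ rather than by your persistence-plus-known-eigenfunction argument; both are valid.
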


\subsubsection{Reduction to a Schrödinger eigenvalue problem}

We start by introducing the relevant family of Schrödinger operators.

\begin{prop} \label{prop:Schrödinger}
    Let $\varepsilon_0 > 0$ be as in Lemma \ref{lem:three_real_solutions} and let $\varepsilon \in B'(\varepsilon_0) \cap (0, \infty)$ be arbitrary. Then the operator $\Hren: L^2(\bbR) \supseteq H^2(\bbR) \rightarrow L^2(\bbR); u \mapsto - \partial_x^2 u + \Qren u$, with 

    \begin{equation}
        \Qren(x) :=  - \fren'(\Pren(x)) + \frac{1}{4} \sren^2. \label{eq:Q_ren}
    \end{equation}

    is self-adjoint with essential spectrum satisfying $\sigma_{\ess}(\Hren) \subseteq [6 \varepsilon^{-2} + \calO(\varepsilon^{-1}), \infty)$.
\end{prop}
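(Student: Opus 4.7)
The plan is to mirror the strategy of Subsection \ref{subsec:essential_spectrum} in the self-adjoint Schrödinger setting, splitting the argument into self-adjointness and the containment of the essential spectrum.

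For self-adjointness, I observe that for fixed $\varepsilon \in B'(\varepsilon_0) \cap (0,\infty)$ the profile $\Pren$ is real-valued and bounded on $\bbR$, taking values in $[\zminus, \zplus]$ by Theorem \ref{thm:wave_data} together with the ordering \eqref{eq:ordering}. Hence $\fren'(\Pren)$ is a bounded real function and $\Qren$ is a bounded, symmetric multiplication operator on $L^2(\bbR)$. Since $-\partial_x^2$ with domain $H^2(\bbR)$ is self-adjoint, Kato-Rellich with relative bound $0$ gives that $\Hren$ is self-adjoint on $H^2(\bbR)$.

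For the containment of the essential spectrum, I would first compute the asymptotic values of $\Qren$. By Theorem \ref{thm:wave_data}, $\Pren(x) \to \zpm$ as $x \to \pm \infty$ (with exponential rate inherited from $\Phux$), and continuity of $\fren'$ yields
\begin{equation}
    \Qren(x) \to Q^{\pm}_{\infty} := -\fren'(\zpm) + \tfrac{1}{4}\sren^2, \quad x \to \pm\infty.
\end{equation}
Using the identity $\fren'(\zpm) = \anull^{\pm}$ from the proof of Lemma \ref{lem:ess_spectrum_M_infty} together with $\sren = \calO(\varepsilon^2)$ from Theorem \ref{thm:wave_data}, this yields $Q^{\pm}_{\infty} = 6 \varepsilon^{-2} + \calO(\varepsilon^{-1})$.

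Next, I would introduce a smooth reference potential $Q^{\infty}_{\varepsilon}$ equal to $Q^{\pm}_{\infty}$ for $\pm x > 1$ and smoothly interpolated on $[-1,1]$, and set $H^{\infty}_{\varepsilon} := -\partial_x^2 + Q^{\infty}_{\varepsilon}$ on $H^2(\bbR)$. The difference $\Hren - H^{\infty}_{\varepsilon}$ is multiplication by a bounded function vanishing at infinity, so by the same factorization argument as in Lemma \ref{lem:rel_compact_pert} (invoking Lemma \ref{lem:compact_multiplication}) it is a relatively compact perturbation of $H^{\infty}_{\varepsilon}$. Weyl's theorem then yields $\sigma_{\ess}(\Hren) = \sigma_{\ess}(H^{\infty}_{\varepsilon})$. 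Applying the Fredholm--ODE criterion from Lemma \ref{lem:Fred_ODE} to $H^{\infty}_{\varepsilon}$, the relevant asymptotic matrices have trace zero and determinant $Q^{\pm}_{\infty} - \lambda$, so their eigenvalues $\pm\sqrt{Q^{\pm}_{\infty} - \lambda}$ are hyperbolic with common Morse index $1$ for every $\lambda \notin [\min\{Q^-_{\infty}, Q^+_{\infty}\}, \infty)$. The same reasoning as in Lemma \ref{lem:ess_spectrum_M_infty} then delivers the required containment.

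The main obstacle is essentially bookkeeping: one must check that the Fredholm--ODE machinery of the previous subsection applies verbatim once the first-order term disappears, and track the leading-order cancellations in the expansion of $\anull^{\pm}$ that persist after the substitution $\fren'(\zpm)=\anull^{\pm}$. No new analytic ingredients are needed beyond those already developed for Lemma \ref{lem:rel_compact_pert} and Lemma \ref{lem:ess_spectrum_M_infty}.
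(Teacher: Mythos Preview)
Your proposal is correct and follows the paper's overall architecture---self-adjointness via Kato--Rellich, then a relatively compact decomposition plus Weyl's theorem---but diverges at the final step. The paper computes $\sigma_{\ess}$ of the asymptotic Schr\"odinger operator $-\partial_x^2 + \Qinf$ via the variational inequality $\inf\sigma_{\ess} \geq \inf_x \Qinf(x)$ (citing \cite[Prop.~12.1]{hislopIntroductionSpectralTheory1996}), which requires the smooth interpolation on $[-1,1]$ to be \emph{monotone} so that the pointwise infimum equals $\min\{Q^{\infty,-}_{\varepsilon}, Q^{\infty,+}_{\varepsilon}\}$. You instead recycle the Fredholm--ODE criterion of Lemma~\ref{lem:Fred_ODE}, reading off the Fredholm borders from the asymptotic matrices. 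Your route avoids importing a new tool and reuses Section~\ref{subsec:essential_spectrum} wholesale; the paper's route is shorter once one accepts the variational bound and is perhaps the more idiomatic Schr\"odinger argument. One small slip: the determinant of your asymptotic matrix is $\lambda - Q^{\pm}_{\infty}$, not $Q^{\pm}_{\infty} - \lambda$, though your stated eigenvalues $\pm\sqrt{Q^{\pm}_{\infty}-\lambda}$ are correct and the hyperbolicity conclusion is unaffected.
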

\begin{proof}
    Akin to Section \ref{subsec:essential_spectrum}, we can decompose $\Qren$ into the sum of
    
    \begin{equation}
    \Qinf(x) := \begin{cases}
        Q^{-}_{\varepsilon}, & x < -1, \\
        Q^{+}_{\varepsilon}, & x > 1, \\
        \text{smooth and monotone} & \text{else},
    \end{cases}, \qquad Q_{\varepsilon}^{\infty, \pm} := \lim_{x \rightarrow \pm \infty} \Qren(x) = - \anull^{\pm},
    \end{equation}

    and $K \in L^{\infty}(\bbR)$ such that $\lim_{R \rightarrow \infty} \sup_{\vert x \vert > R} \vert K(x) \vert = 0$. Since $\Qren$ and $\Qinf$ are bounded and real, their associated multiplication operators are bounded and symmetric. Hence, via the Kato--Rellich theorem \cite[Thm. V.4.11.]{katoPerturbationTheoryLinear1995}, both $- \partial_x^2 + \Qinf$ and $- \partial_x^2 + \Qren$ are self-adjoint on $L^2(\bbR)$ with domain $H^2(\bbR)$ and bounded from below. Furthermore, by the same argument as in Lemma \ref{lem:rel_compact_pert} (using compactness of the $K$-multiplication operator relative to $-\Delta$ and Lemma \ref{lem:compact_multiplication}), Weyl's theorem \cite[Thm. IV.5.35.]{katoPerturbationTheoryLinear1995} shows that their essential spectra agree. Thus, finally, since $\Qinf$ is asymptotically constant and monotone, $\inf_{x \in \bbR} \Qinf(x) = \min \{ Q_{\varepsilon}^{\infty, -}, Q_{\varepsilon}^{\infty, +} \}$ and the variational inequality \cite[Prop. 12.1]{hislopIntroductionSpectralTheory1996} shows that

    \begin{equation}
        \inf \sigma_{\ess}(- \partial_x^2 + \Qren) = \inf \sigma_{\ess}(- \partial_x^2 + \Qinf) \geq \inf_{u \in H^2(\bbR)} \langle (- \partial_x^2 + \Qinf) u, u \rangle \geq \min \{ Q_{\varepsilon}^{\infty, -}, Q_{\varepsilon}^{\infty, +} \}. \label{eq:variational_lower_bound}
    \end{equation}

    Since

    \begin{equation}
        Q_{\varepsilon}^{\infty, \pm} = \underbrace{- \fren'(\zpm)}_{= 6 \varepsilon^{-2} + \calO(\varepsilon^{-1})} + \frac{1}{4} \underbrace{\sren^2}_{\calO(\varepsilon^{4})},
    \end{equation}

    the estimate \eqref{eq:variational_lower_bound} shows the inclusion for the essential spectrum.
\end{proof}

\begin{figure}
    \centering
    \includegraphics[width=0.6\textwidth]{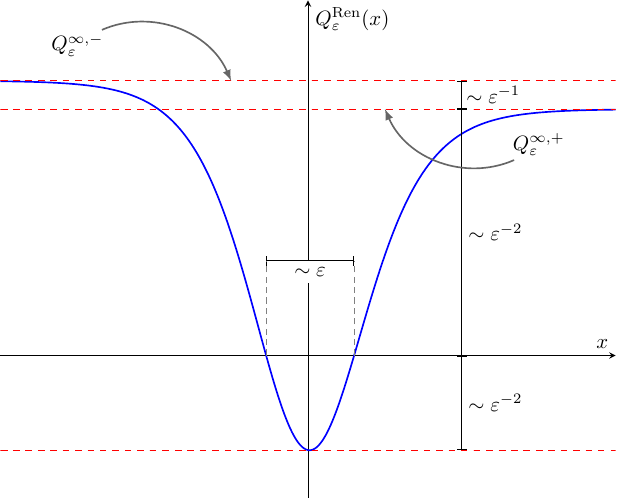} 
    \caption{Plot of the potential $\Qren$ from \eqref{eq:Q_ren} for a fixed $\varepsilon \in B'(\varepsilon_0) \cap (0, \infty)$.}
    \label{fig:potential_Q_ren}
\end{figure}

The general idea is to conjugate $\Lren$ to the Schrödinger operator $\Hren$, defined in Proposition \ref{prop:Schrödinger}, via an appropriate multiplication operator 

\begin{equation}
    S: L^2(\bbR) \rightarrow L^2(\bbR, w_{\beta}); \quad u \mapsto \exp\left( - \frac{1}{2} \sren x \right) u,
\end{equation}

that eliminates the first order coefficient; i.e. $\Hren := S^{-1} \Lren S = - \partial_x^2 + \Qren$ for an appropriate potential $\Qren \in L^{\infty}$. However, the multiplier $S$, which is fixed by the requirement of eliminating $\sren \partial_x$, is not a bounded (or even well-defined) linear operator, and even if it were, the domain of $\Hren$ would not be $H^2(\bbR)$, and thus $\Hren$ not self-adjoint. 

However, since we are only interested in a portion of the spectrum of $\Lren$, and the involved operators make sense on spaces with much weaker integrability constraints, we can utilize Agmon decay; that is, eigenfunctions of Schrödinger operators decay very fast outside of classically allowed regions (which is the complement of a compact set in our case). This allows to show "by hand" that eigenvalues of $\Lren$ that lie outside of the essential spectrum are also eigenvalues of $\Hren$. 

\begin{prop} \label{prop:eigenvector_of_Schrödinger}
    For $\varepsilon_0 > 0$ as in Lemma \ref{lem:three_real_solutions} and $\varepsilon \in B'(\varepsilon_0) \cap (0, \infty)$, define

    \begin{equation}
        \rho_{\varepsilon}(x) := \exp\left( - \frac{1}{2} \sren x \right), \quad x \in \bbR. \label{eq:rho_eps}
    \end{equation}
    
    Let $\lambda \in \sigma_0(\Lren)$ with $\Real(-\lambda) < \inf \sigma_{\ess}(\Hren)$ and associated eigenvector $u \in \ker( \Lren - \lambda I)$. Then there exists $\varepsilon_0 > 0$ small enough such that for all $\varepsilon \in B'(\varepsilon_0) \cap (0, \infty)$,

    \begin{equation}
        \rho_{\varepsilon} u \in H^2(\bbR), \quad \text{and} \quad \Hren (\rho_{\varepsilon} u) = - \lambda (\rho_{\varepsilon} u) \label{eq:Schrodinger_eigenfunction}
    \end{equation}
\end{prop}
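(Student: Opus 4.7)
My plan splits into a pointwise algebraic step and an analytic decay step: first one verifies the Schrödinger equation for $v:=\rho_\varepsilon u$ directly from the ODE for $u$, and then one uses the spectral-gap hypothesis to force $v$ into $L^2(\bbR)$.

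Since $u \in H = T^{-1}(H^2(\bbR))$ and the weight $w_\beta$ is smooth and bounded away from $0$ on every compact set, $u$ lies in $H^2_{\mathrm{loc}}(\bbR)$, and the eigenvalue relation $\Lren u = \lambda u$ reads pointwise a.e.\ as $\partial_x^2 u + \sren \partial_x u + \fren'(\Pren) u = \lambda u$. The coefficients are smooth, so ODE bootstrapping upgrades $u$ to $C^\infty(\bbR)$. A direct computation, using $\rho_\varepsilon'/\rho_\varepsilon = -\tfrac{1}{2}\sren$ and the definition $\Qren = -\fren'(\Pren) + \tfrac{1}{4}\sren^2$, shows that the quadratic completion of the first-order term $\sren \partial_x$ reproduces exactly the Schrödinger form, so that $v := \rho_\varepsilon u$ satisfies $-\partial_x^2 v + \Qren v = -\lambda v$ pointwise on $\bbR$ in the classical sense. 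This is the same conjugation calculation motivating the formal operator $S$ introduced above the statement of the proposition.

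The main work is to show $v \in L^2(\bbR)$, which, given the exponential factor $\rho_\varepsilon$, requires quantitative exponential decay of $u$ at $\pm\infty$. I would extract this from the planar first-order ODE for $(u,u')^{\mathsf T}$, whose coefficient matrix converges to $\Asymppm(\lambda)$ from Lemma~\ref{lem:Fred_ODE} as $x \to \pm\infty$; the eigenvalues are
\[
    -\tfrac{1}{2}\sren \pm \sqrt{\tfrac{1}{4}\sren^2 - \anull^{\pm} + \lambda}.
\]
Under the hypothesis $\Real(-\lambda) < \inf\sigma_{\ess}(\Hren)$, combined with $\anull^{\pm} = -6\varepsilon^{-2} + \calO(\varepsilon^{-1})$ and $\sren = \calO(\varepsilon^2)$ from Lemma~\ref{lem:ess_spectrum_M_infty} and Theorem~\ref{thm:wave_data}, the argument under the square root has real part of order $\varepsilon^{-2}$, so the principal root has real part $\simeq \sqrt{6}\,\varepsilon^{-1}$. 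Hence $\Asymppm(\lambda)$ are hyperbolic with one fast growing and one fast decaying direction. Standard asymptotic ODE theory (Levinson-type, or roughness of exponential dichotomies) provides a basis of solutions behaving like pure exponentials near $\pm\infty$, and membership $u \in L^2(\bbR,w_\beta)$ with only polynomial weight rules out any contribution of the exponentially growing modes. Hence $u$ decays exponentially at rate $\simeq \sqrt{6}\,\varepsilon^{-1}$ on both sides.

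Since $\rho_\varepsilon$ grows only at rate $|\sren|/2 = \calO(\varepsilon^2)$, the product $v = \rho_\varepsilon u$ retains exponential decay, so $v \in L^2(\bbR)$. Combined with $\Qren \in L^{\infty}(\bbR)$ and the classical equation $\Hren v = -\lambda v$, a single elliptic bootstrap then promotes $v$ to $H^2(\bbR)$. The main obstacle is the decay step — turning the qualitative gap $\Real(-\lambda) < \inf\sigma_{\ess}(\Hren)$ into a quantitative exponential rate on $u$. A cleaner alternative to the detailed asymptotic ODE analysis is a direct Agmon commutator identity applied to the classical equation for $v$: for any $\tau < \sqrt{\inf\sigma_{\ess}(\Hren) - \Real(-\lambda)}$, pairing with the weight $e^{\tau\langle x\rangle}$ yields $e^{\tau\langle x\rangle} v \in L^2$, which gives the desired integrability without ever having to analyse the asymptotic ODE for $u$.
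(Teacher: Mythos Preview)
Your algebraic conjugation step and the reduction of the problem to decay of $v=\rho_\varepsilon u$ match the paper. For the decay step the two arguments diverge.

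The paper does not pass through the ODE for $u$. Instead it invokes a strengthened Agmon theorem (Agmon, \emph{Lectures on Exponential Decay}, Thm.~4.1) which upgrades a \emph{weak} integrability hypothesis $\int |v|^2 e^{-2\gamma|x|}\,dx<\infty$ to the strong one $\int |v|^2 e^{+2\gamma|x|}\,dx<\infty$. The weak hypothesis is verified directly from $u\in H$: writing $u=w_\beta^{-1/2}v_0$ with $v_0\in H^2(\bbR)$, one has $|\rho_\varepsilon u|^2 e^{-2\gamma|x|}=|v_0|^2 w_\beta^{-1} e^{(|\sren|-2\gamma)|x|}$, and since $|\sren|=\calO(\varepsilon^2)$ one can choose $\gamma$ with $|\sren|/2<\gamma<(\Sigma+\Real(\lambda))^{1/2}$. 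The point, stressed in the remark after the paper's proof, is that the \emph{standard} Agmon commutator estimate --- the one you sketch in your last sentence --- requires $v\in L^2(\bbR)$ a priori, which is unavailable here because $H\not\subseteq L^2(\bbR)$. Your ``cleaner alternative'' thus has exactly this gap.

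Your primary route via the asymptotic first-order system for $u$ is a legitimate substitute and in fact sidesteps the $L^2$ issue: polynomial integrability of $u$ already kills the unstable mode of $\Asymppm(\lambda)$, so $u$ decays exponentially and multiplication by $\rho_\varepsilon$ is harmless. Two caveats. First, the hypothesis $\Real(-\lambda)<\inf\sigma_{\ess}(\Hren)$ does not give the rate $\sqrt{6}\,\varepsilon^{-1}$ you claim; it only gives $\Real\sqrt{Q_\varepsilon^{\infty,\pm}+\lambda}>0$, which can be small --- but this is enough since you only need to beat $|\sren|/2=\calO(\varepsilon^2)$. Second, hyperbolicity of $\Asymppm(\lambda)$ (one growing, one decaying mode) is what makes the ``rule out the growing mode'' step work; this follows from the same inequality together with $|\sren|$ small, but should be stated rather than inferred from the $\varepsilon^{-2}$ heuristic.
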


\textbf{Attention:} Note the switch of signs; i.e. $\lambda \in \sigma_{0}(\Lren)$ implies $-\lambda \in \sigma_0(\Hren)$. This results from the choice of making $\sH$ positive (instead of negative) which is more in line with the Schrödinger operator literature and intuition. 

\begin{proof}[Proof of Proposition \ref{prop:eigenvector_of_Schrödinger}]
    Let $u \in H$ such that $\Lren u = \lambda u$. Then, in the distributional sense, via

    \begin{equation}
        \rho_{\varepsilon}' = - \frac{\sren}{2} \rho_{\varepsilon}, \quad (\rho_{\varepsilon} u)' = - \frac{\sren}{2} \rho_{\varepsilon} u + \rho_{\varepsilon} \partial_x u \quad (\rho_{\varepsilon} u)'' = \frac{\sren^2}{4} \rho_{\varepsilon} u - \sren \rho_{\varepsilon} \partial_x u + \rho_{\varepsilon} \partial_x^2 u
    \end{equation}

    we have 
    
    \begin{align}
        - \Hren (\rho_{\varepsilon} u) &= (\rho_{\varepsilon} u)'' - \Qren (\rho_{\varepsilon} u) \\
        &= \frac{\sren^2}{4} \rho_{\varepsilon} u \underbrace{- \sren \rho_{\varepsilon} \partial_x u + \rho_{\varepsilon} \partial_x^2 u + \fren'(\Pren)(x) \rho_{\varepsilon} u}_{= \rho_{\varepsilon} \Lren u = \rho_{\varepsilon} \lambda u} - \frac{1}{4} \sren^2 \rho_{\varepsilon} u \\
        &= \rho_{\varepsilon} \lambda u = \lambda (\rho_{\varepsilon} u).
    \end{align}

    It is thus only left to show that the distribution $\rho_{\varepsilon} u$ lies in $H^2(\bbR)$. This is only a question of decay (not regularity), and so we want to apply the decay result of \cite[Thm. 4.1]{agmonLecturesExponentialDecay2014}.

    Since $\Qren$ is real and bounded for $\varepsilon \in B'(\varepsilon_0) \cap (0, \infty)$, the operator $- \partial_x^2 + \Qren$ with domain $H^2(\bbR)$ is semi-bounded on $L^2(\bbR)$ by \cite[Thm. V.4.11.]{katoPerturbationTheoryLinear1995}, and therefore the operator $\Hren$ is of the form required in \cite[Thm. 4.1]{agmonLecturesExponentialDecay2014}. In addition, by \cite[Thm 3.2]{agmonLecturesExponentialDecay2014}\footnote{Since $\Hren$ with domain $H^2(\bbR)$ is self-adjoint on $L^2(\bbR)$, the definition of the essential spectrum in \cite[Thm 3.2]{agmonLecturesExponentialDecay2014} coincides with our definition in \eqref{eq:sigma_ess} - see \cite[Sec. 2]{jeribiGustafsonWeidmannKato2011}.} and by assumption

    \begin{align}
        \Sigma(- \partial_x^2 + \Qren) &:= \sup_{K \Subset \bbR} \inf \left\{ \frac{\langle (- \partial_x^2 + \Qren) \varphi, \varphi \rangle}{\langle \varphi, \varphi \rangle} : \varphi \in C^{\infty}_c(\bbR \setminus K), \varphi \neq 0 \right\} = \inf \sigma_{\ess}(\Hren) > \Real(- \lambda). \label{eq:bounded_from_below}
    \end{align}

    Therefore, we may choose $\gamma$ such that $0 < \gamma < (\Sigma + \Real(\lambda))^{1/2}$, and because $\sren = \calO(\varepsilon^2)$ (see Theorem \ref{thm:wave_data}), we may shrink $\varepsilon_0 > 0$ such that $\vert \sren \vert - 2 \gamma < 0$ for all $\varepsilon \in B'(\varepsilon_0) \cap (0, \infty)$. Then, writing $u = w_{\beta}^{-1/2} v$ with $v \in H^2(\bbR)$, the weak integrability condition

    \begin{equation}
        \int_{\bbR} \vert \rho_{\varepsilon} u \vert^2 \exp(-2 \gamma \vert x \vert) \dd x = \int_{\bbR} \vert v \vert^2 w_{\beta}^{-1}\exp( \underbrace{(\vert \sren \vert - 2 \gamma)}_{< 0} \vert x \vert) \dd x < \infty \label{eq:weak_int}
    \end{equation}
    
    is satisfied and thus by \cite[Thm. 4.1]{agmonLecturesExponentialDecay2014} we obtain the strong integrability condition

    \begin{equation}
        \int_{\bbR} \vert \rho_{\varepsilon} u \vert^2 \exp(2 \gamma \vert x \vert) \dd x < \infty,
    \end{equation}

    and in particular $\rho_{\varepsilon} u \in H^2(\bbR)$.
\end{proof}

\begin{rem}
    There are weaker and more standard versions of Agmon's decay theorem than the one used here, e.g. \cite[Thm. 3.4]{hislopIntroductionSpectralTheory1996}. But since $H \not\subseteq L^2(\bbR)$, eigenfunctions of $\Lren$ are not necessarily square integrable, and hence we cannot apply them directly.
\end{rem}

\subsubsection{Meromorphic perturbation}

Plugging $\Pren$ into $\fren'$ shows that for $\varepsilon$ real and small enough, the depth of the well of $\Qren$ grows like $\varepsilon^{-2}$, while its width shrinks like $\varepsilon$ - see Figure \ref{fig:potential_Q_ren}. Hence, we expect that 

\begin{equation}
    \Qren(x) = \varepsilon^{-2} \Qhol(\varepsilon^{-1} x), \quad \varepsilon \in B'(\varepsilon_0) \cap (0, \infty), x \in \bbR, \label{eq:impl_Qhol}
\end{equation}

for some family of potentials $\Qhol$ that remain bounded as $\varepsilon \rightarrow 0$. From scaling heuristics for Schrödinger operators, one can then already conjecture that $\sigma(\Hren) = \varepsilon^{-2} \sigma(\Hhol)$ for the corresponding family of operators $\Hhol := - \partial_x^2 + \Qhol$ which will be obtained from $\Hren$ via dilation operators. Since all the data considered up to this point was holomorphic/meromorphic in $\varepsilon$, we expect that the family $\{ \Hhol : \varepsilon \in B(\varepsilon_0) \}$ is a holomorphic family in the sense of Kato, which will allow to apply analytic perturbation theory and close the argument.

For $\varepsilon > 0$, let 

\begin{equation}
    \dila: L^2(\bbR) \rightarrow L^2(\bbR); \quad u \mapsto \varepsilon^{1/2} u(\varepsilon \cdot)
\end{equation}

denote the real dilation operators and recall that $\dila$ is unitary with inverse $\dila^{-1} = U_{\varepsilon^{-1}}$.

\begin{lem} \label{lem:scaling_Hren_Hhol}
    Let $\varepsilon_0 > 0$ be as in Lemma \ref{lem:three_real_solutions} and let $\varepsilon \in B'(\varepsilon_0) \cap (0, \infty)$. Then the operator $\Hhol := - \partial_x^2 u + \Qhol$ with
    
    \begin{equation}
        \Qhol(x) := \varepsilon^2 \Qren(\varepsilon x), \quad x \in \bbR, \label{eq:Q_hol}
    \end{equation}

    is self-adjoint on $H^2(\bbR) \subseteq L^2(\bbR)$ and satisfies  

    \begin{equation}
        \dila \Hren \dila^{-1} = \varepsilon^{-2} \Hhol.
    \end{equation}
    
    In particular, 

    \begin{equation}
        \varepsilon^2 \sigma(\Hren) = \sigma(\Hhol), \quad \varepsilon \in B'(\varepsilon_0) \cap (0, \infty).
    \end{equation}
\end{lem}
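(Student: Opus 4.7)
\textbf{Proof proposal for Lemma \ref{lem:scaling_Hren_Hhol}.} The plan is to verify the claim by a direct computation, since the real dilation $\dila$ is unitary and maps $H^2(\bbR)$ onto itself, so all domain issues are automatic. The self-adjointness of $\Hhol$ will come for free from Proposition \ref{prop:Schrödinger} via conjugation, once the intertwining identity is established.

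First I would check self-adjointness of $\Hhol$ directly: since $\Qren$ is bounded and real-valued on $\bbR$ for $\varepsilon \in B'(\varepsilon_0)\cap(0,\infty)$ (as a consequence of Proposition \ref{prop:Schrödinger}), so is $\Qhol(x) = \varepsilon^2 \Qren(\varepsilon x)$. The associated multiplication operator is bounded and symmetric on $L^2(\bbR)$, and the Kato--Rellich theorem \cite[Thm. V.4.11.]{katoPerturbationTheoryLinear1995} then yields self-adjointness of $-\partial_x^2 + \Qhol$ on $H^2(\bbR)$. Alternatively, this will also follow \emph{a posteriori} from the conjugation identity and self-adjointness of $\Hren$.

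Next I would compute the two pieces of $\dila \Hren \dila^{-1}$ separately. For $u \in H^2(\bbR)$, a change of variables gives $\dila^{-1} u = \varepsilon^{-1/2} u(\varepsilon^{-1} \cdot)$, and differentiating twice followed by another dilation yields $\dila \partial_x^2 \dila^{-1} = \varepsilon^{-2} \partial_x^2$. Similarly, for any bounded measurable $V$, the computation $(\dila M_V \dila^{-1} u)(x) = \varepsilon^{1/2} V(\varepsilon x) \cdot \varepsilon^{-1/2} u(x) = V(\varepsilon x) u(x)$ shows $\dila M_V \dila^{-1} = M_{V(\varepsilon \cdot)}$. Applying this with $V = \Qren$ and combining,
\begin{equation}
    \dila \Hren \dila^{-1} = - \varepsilon^{-2} \partial_x^2 + \Qren(\varepsilon \cdot) = \varepsilon^{-2} \bigl( - \partial_x^2 + \varepsilon^2 \Qren(\varepsilon \cdot) \bigr) = \varepsilon^{-2} \Hhol.
\end{equation}
Because $\dila$ is unitary on $L^2(\bbR)$ with $\dila(H^2(\bbR)) = H^2(\bbR)$, the conjugation is an isomorphism of closed operators, so equality of operators is meant on the nose (including domains).

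Finally, the spectral identity is immediate from the fact that unitary conjugation preserves the spectrum and that rescaling by a non-zero real scalar rescales the spectrum accordingly:
\begin{equation}
    \sigma(\Hren) = \sigma(\dila \Hren \dila^{-1}) = \sigma(\varepsilon^{-2} \Hhol) = \varepsilon^{-2} \sigma(\Hhol),
\end{equation}
which rearranges to $\varepsilon^2 \sigma(\Hren) = \sigma(\Hhol)$. There is no genuine obstacle here; the only point that requires a moment of care is tracking the half-power factors $\varepsilon^{\pm 1/2}$ in the definition of $\dila$ so that the multiplication part of the conjugation comes out with the correct prefactor, and confirming that $\dila$ preserves the domain $H^2(\bbR)$ (which it does, as dilations commute with differentiation up to scalars).
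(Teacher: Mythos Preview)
Your proposal is correct and follows essentially the same approach as the paper: self-adjointness via Kato--Rellich from boundedness and real-valuedness of $\Qhol$, then a direct computation of $\dila \Hren \dila^{-1}$ (you split it into the Laplacian and potential pieces, the paper computes both at once), and the spectral identity from unitarity of $\dila$. There is nothing substantively different.
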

\begin{proof}
    For $\varepsilon \in B'(\varepsilon_0) \cap (0, \infty)$, the function $\Qren$ is real-valued, bounded, and symmetric, and therefore so is $\Qhol$. Thus, by the Kato--Rellich theorem \cite[Thm. V.4.11.]{katoPerturbationTheoryLinear1995}, $\Hhol$ is self-adjoint on $H^2(\bbR)$. Let $u \in H^2(\bbR)$ be arbitrary. Then 

    \begin{align}
        \dila \Hren \dila^{-1} u &= \varepsilon^{-1/2} \dila \left( - \partial_x^2 (u(\varepsilon^{-1} \cdot)) - \Qren u(\varepsilon^{-1} \cdot) \right) \\
        &= - \varepsilon^{-2} \Delta u - \Qren(\varepsilon \cdot) u \\
        &= \varepsilon^{-2}( - \partial_x^2 - \underbrace{\varepsilon^2 \Qren(\varepsilon \cdot)}_{= \Qhol}) u \\
        &= \varepsilon^{-2} \Hhol u.
    \end{align}
    This finishes the proof.
\end{proof}

\begin{prop} \label{prop:holo_family}
    Let $\varepsilon_0 > 0$ be as in Lemma \ref{lem:three_real_solutions}. Then the family of self-adjoint linear operators $\{ - \partial_x^2 + \Qhol : \varepsilon \in B'(\varepsilon_0) \cap (0, \infty)\}$ has an extension $\{ - \partial_x^2 + \Qhol : \varepsilon \in B(\varepsilon_0)\}$ that is a holomorphic family in the sense of Kato.
\end{prop}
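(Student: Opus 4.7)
The plan is to verify Kato's criterion for a holomorphic family of type (A) on the common domain $H^2(\bbR)$. Since $-\partial_x^2$ is closed on $H^2(\bbR)$ and $\Qhol$ acts as a bounded multiplication operator (once we know $\Qhol \in L^{\infty}(\bbR)$), the operator $-\partial_x^2 + \Qhol$ is closed with domain $H^2(\bbR)$ via the standard bounded-perturbation argument, and holomorphy of $\varepsilon \mapsto (-\partial_x^2 + \Qhol)u \in L^2(\bbR)$ for each fixed $u \in H^2(\bbR)$ follows from $\|\Qhol u\|_{L^2} \le \|\Qhol\|_{L^{\infty}} \|u\|_{L^2}$. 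Both requirements thus reduce to a single analytic statement: that $\varepsilon \mapsto \Qhol$ extends from $B'(\varepsilon_0) \cap (0, \infty)$ to a bounded, $L^{\infty}(\bbR)$-valued holomorphic map on $B(\varepsilon_0)$, after possibly shrinking $\varepsilon_0$.

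The algebraic heart of the matter is to exhibit cancellations between the $\varepsilon^{-1}$ singularities in $\Pren$ and $\zminus$ and the explicit $\varepsilon^{-2}$ in $\fren'$. Using $\fren'(u) = -3u^2 + 2(1+\alpha)u + 3\varepsilon^{-2} - \alpha$, $\Pren(\varepsilon x) = \Aren\Phux(\Aren\varepsilon x) + \zminus$, and $\Qren = -\fren'(\Pren) + \tfrac{1}{4}\sren^2$, one obtains
\begin{equation}
    \Qhol(x) \;=\; \varepsilon^2\Qren(\varepsilon x) \;=\; 3(\varepsilon \Pren(\varepsilon x))^2 - 2(1+\alpha)\varepsilon(\varepsilon \Pren(\varepsilon x)) - 3 + \alpha \varepsilon^2 + \tfrac{1}{4}\varepsilon^2 \sren^2.
\end{equation}
Setting $b_{\varepsilon} := \Aren \varepsilon$ and $c_{\varepsilon} := \varepsilon \zminus$, one has $\varepsilon \Pren(\varepsilon x) = b_{\varepsilon}\Phux(b_{\varepsilon} x) + c_{\varepsilon}$. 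By \eqref{eq:Aren} and the Laurent series for $\zminus$ in Lemma \ref{lem:three_real_solutions}, both $b_{\varepsilon}$ and $c_{\varepsilon}$ extend holomorphically to $B(\varepsilon_0)$ with $b_0 = \sqrt{12}$ and $c_0 = -\sqrt{3}$; likewise $\sren = \sqrt{2}\Aren(\ahux - 1/2)$ extends holomorphically with value $0$ at $\varepsilon = 0$ by the Taylor expansion of $\ahux - 1/2$ from the proof of Theorem \ref{thm:wave_data}. Granted a uniform $L^{\infty}(\bbR)$-bound on $\Phux(b_{\varepsilon}x)$, the displayed right-hand side is then a polynomial in holomorphic and uniformly bounded quantities.

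The main technical obstacle is precisely this uniform bound: one needs $\Phux(b_{\varepsilon}x) = (1 + e^{-b_{\varepsilon} x/\sqrt{2}})^{-1}$ to be uniformly bounded on $\bbR$ for $\varepsilon$ in a complex neighbourhood of $0$. The zeros of $1 + e^{-y/\sqrt{2}}$ lie at $y = i\pi\sqrt{2}(2k+1)$ on the imaginary axis. Writing $b_{\varepsilon} = p_{\varepsilon} + i q_{\varepsilon}$, a direct expansion gives
\begin{equation}
    |1 + e^{-b_{\varepsilon} x/\sqrt{2}}|^2 \;=\; 1 + 2 e^{-p_{\varepsilon} x/\sqrt{2}} \cos(q_{\varepsilon} x/\sqrt{2}) + e^{-2 p_{\varepsilon} x/\sqrt{2}},
\end{equation}
and after shrinking $\varepsilon_0$ we have $p_{\varepsilon}$ close to $\sqrt{12}$ and $|q_{\varepsilon}| \ll p_{\varepsilon}$. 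Any near-cancellation $\cos(q_{\varepsilon}x/\sqrt{2}) \approx -1$ then forces $|x| \gtrsim 1/|q_{\varepsilon}|$, at which point one of $e^{\pm p_{\varepsilon} x/\sqrt{2}}$ is exponentially small in $1/|q_{\varepsilon}|$ and the expression is bounded below by a positive constant uniformly in $x \in \bbR$. This uniform bound, combined with the obvious pointwise-in-$x$ analyticity of $\varepsilon \mapsto \Phux(b_{\varepsilon} x)$ and a standard dominated-convergence argument for difference quotients, yields the required $L^{\infty}(\bbR)$-valued holomorphy of $\Qhol$; Kato's type (A) criterion then closes the proof.
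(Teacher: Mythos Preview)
Your argument is correct and follows essentially the same route as the paper: both reduce to showing that $\varepsilon \mapsto \Qhol$ extends to a bounded holomorphic $L^{\infty}$-valued map on $B(\varepsilon_0)$, via the same algebraic expansion of $\Qhol$ in terms of $\Phux(b_{\varepsilon} x)$ with $b_{\varepsilon}=\varepsilon\Aren$. The paper packages the uniform bound on $\Phux(b_{\varepsilon} x)$ as a cone argument (restricting $b_{\varepsilon} x$ to a region $\Sigma$ away from the poles of $\Phux$, Lemma~\ref{lem:exp_decay}) and then records a quantitative $O(|\varepsilon|^{3/2})$ estimate (Lemmas~\ref{lem:Phi_holo} and~\ref{lem:Q_holo}) before concluding that $\{M_{\Qhol}\}$ is a bounded-holomorphic family; your direct lower bound on $|1+e^{-b_{\varepsilon} x/\sqrt{2}}|^2$ reaches the same uniform bound more economically, and your appeal to the type~(A) criterion is an equally valid endpoint.

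One small slip: dominated convergence does not deliver $L^{\infty}$-norm convergence of difference quotients. The clean fixes are either (i) invoke the standard fact that local boundedness plus weak holomorphy (which your pointwise analyticity and uniform bound do give, via Morera and Fubini against any $g\in L^1$) implies strong holomorphy into $L^{\infty}$, or (ii) use Cauchy's integral formula in $\varepsilon$: your uniform bound on $\Phux(b_{\zeta} x)$ over a circle $|\zeta-\varepsilon|=r$ then bounds the difference-quotient remainder uniformly in $x$. Alternatively, note that for the type~(A) criterion you only need $L^2$-holomorphy of $\varepsilon\mapsto \Qhol u$ for each fixed $u\in H^2(\bbR)$, and for that ordinary $L^2$-dominated convergence (with dominator $C|u|$) already suffices.
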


We postpone the proof of Proposition \ref{prop:holo_family} until after Lemma \ref{lem:Phi_holo} and Lemma \ref{lem:Q_holo}.

\begin{lem} \label{lem:Phi_holo}
    There exists $\varepsilon_0 > 0$ small enough such that the function $\Pholnoeps: B'(\varepsilon_0) \cap (0, \infty) \rightarrow L^{\infty}(\bbR)$, defined by

    \begin{equation}
        \Phol(x) := \varepsilon \Pren(\varepsilon x) \quad x \in \bbR, \label{eq:Phol}
    \end{equation}

    has a holomorphic extension to all of $B(\varepsilon_0)$, satisfying 
    
    \begin{equation}
        \sup_{x \in \bbR} \left\vert \Phol(x) - \Phollim(x) - \varepsilon \frac{1 + \alpha}{3} \right\vert \lesssim \vert \varepsilon \vert^{3/2}. \label{eq:Phol_estimate}
    \end{equation}
\end{lem}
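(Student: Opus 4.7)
The plan is to combine the explicit factorisation $\Pren(x) = \Aren\, \Phux(\Aren x) + \zminus$ from Theorem \ref{thm:wave_data} with the Laurent expansions of $\Aren$ and $\zminus$ furnished by Lemma \ref{lem:three_real_solutions}. Substituting the rescaling $\varepsilon x$ into the definition \eqref{eq:Phol} gives
\begin{equation}
\Phol(x) \;=\; \tilde{A}(\varepsilon)\, \Phux\!\bigl(\tilde{A}(\varepsilon)\, x\bigr) \;+\; \tilde{z}(\varepsilon),
\end{equation}
where $\tilde{A}(\varepsilon) := \varepsilon \Aren$ and $\tilde{z}(\varepsilon) := \varepsilon \zminus$. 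Since $\Aren$ and $\zminus$ have at most simple poles at $\varepsilon = 0$, the functions $\tilde{A}$ and $\tilde{z}$ extend holomorphically to $B(\varepsilon_0)$; reading off the leading Taylor coefficients yields $\tilde{A}(\varepsilon) = 2\sqrt{3} + \mathcal{O}(\varepsilon^2)$ — the $\varepsilon$-term vanishes because the constant terms of $\zplus$ and $\zminus$ coincide — and $\tilde{z}(\varepsilon) = -\sqrt{3} + \tfrac{1+\alpha}{3}\varepsilon + \mathcal{O}(\varepsilon^2)$.

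The main work is to upgrade this scalar holomorphic picture to a statement at the level of $L^\infty(\bbR)$. The sigmoid $\Phux(y) = (1+e^{-y/\sqrt{2}})^{-1}$ is meromorphic on $\bbC$ with simple poles precisely at $y \in \ii\sqrt{2}\pi(2\bbZ + 1)$, all lying on the imaginary axis. For $\varepsilon \in B(\varepsilon_0)$, write $\tilde{A}(\varepsilon) = a + \ii b$ with $|b|$ and $|a - 2\sqrt{3}|$ both small after shrinking $\varepsilon_0$. The line $L_{\varepsilon} := \{\tilde{A}(\varepsilon) x : x \in \bbR\}$ through the origin has Euclidean distance $\sqrt{2}\pi\, a / \sqrt{a^2 + b^2}$ to the nearest poles $\pm \ii\sqrt{2}\pi$, which is bounded below uniformly in $\varepsilon \in B(\varepsilon_0)$ by, say, $\sqrt{2}\pi/2$; combined with the asymptotic decay $\Phux(y) \to 1$, respectively $0$, as $\Real(y) \to \pm\infty$ uniformly in $\Ima(y)$, this yields a uniform bound
\begin{equation}
\sup_{\varepsilon \in B(\varepsilon_0)}\, \sup_{x \in \bbR}\, \bigl|\Phux\!\bigl(\tilde{A}(\varepsilon)\, x\bigr)\bigr| < \infty
\end{equation}
and the analogous bound for $\Phux'$. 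In particular $\Phol \in L^\infty(\bbR)$ for every $\varepsilon \in B(\varepsilon_0)$, and since $\varepsilon \mapsto \Phol(x)$ is holomorphic for each fixed $x$, Morera's theorem together with the uniform $L^\infty$-bound on compact subsets of $B(\varepsilon_0)$ shows that $\varepsilon \mapsto \Phol$ is holomorphic as an $L^\infty(\bbR)$-valued map.

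The estimate \eqref{eq:Phol_estimate} then follows from Taylor expansion of the two scalar factors. Using $\tilde{A}(\varepsilon) - 2\sqrt{3} = \mathcal{O}(\varepsilon^2)$, the mean value theorem along the segment from $2\sqrt{3} x$ to $\tilde{A}(\varepsilon) x$, combined with the exponential decay bound $\sup_{x \in \bbR} |x\, \Phux'(2\sqrt{3} x)| < \infty$, gives
\begin{equation}
\bigl\| \tilde{A}(\varepsilon)\, \Phux\!\bigl(\tilde{A}(\varepsilon)\, \cdot\bigr) - 2\sqrt{3}\, \Phux(2\sqrt{3}\, \cdot) \bigr\|_{L^\infty(\bbR)} = \mathcal{O}(\varepsilon^2).
\end{equation}
Identifying $\Phollim(x) = 2\sqrt{3}\, \Phux(2\sqrt{3} x) - \sqrt{3}$ as the $\varepsilon = 0$ value of the holomorphic extension and subtracting the $\varepsilon$-linear contribution of $\tilde{z}$ then produces a combined remainder of order $\mathcal{O}(\varepsilon^2) \lesssim |\varepsilon|^{3/2}$.

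The main obstacle is the geometric step above: verifying that the complex-dilated lines $L_\varepsilon$ remain uniformly separated from the poles of $\Phux$, and that $\Phux$ itself is uniformly bounded along the non-compact lines $L_\varepsilon$. Once this is secured, the holomorphy statement is routine Banach-space-valued complex analysis and the Taylor estimate is a direct scalar computation.
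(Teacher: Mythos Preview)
Your approach is essentially correct and in fact yields the sharper bound $O(|\varepsilon|^{2})$ rather than the paper's $O(|\varepsilon|^{3/2})$. The paper does not attempt a global mean-value estimate for the profile term: it splits at $|x| = |\varepsilon|^{-1/2}$, using only the boundedness of $\Phux'$ on the cone $\Sigma$ (Lemma~\ref{lem:exp_decay}) for $|x| \le |\varepsilon|^{-1/2}$ to produce $|\varepsilon|^{2}\cdot |\varepsilon|^{-1/2} = |\varepsilon|^{3/2}$, and the exponential decay of $\Phux$ to its asymptotic values $0,1$ for $|x| > |\varepsilon|^{-1/2}$ to produce $\exp(-c\,|\varepsilon|^{-1/2})$. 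Your route avoids this case split by exploiting the stronger fact that $z \mapsto z\,\Phux'(z)$ is bounded on the cone, which is cleaner and loses nothing. One point that needs tightening: the mean-value inequality along the complex segment from $2\sqrt{3}\,x$ to $\tilde{A}(\varepsilon)\,x$ requires a bound on $|\Phux'(z)|\cdot |x|$ uniformly over the \emph{segment}, not merely the real-axis quantity $\sup_{x}|x\,\Phux'(2\sqrt{3}\,x)|$ that you cite. The segment does lie in $\Sigma$ for $\varepsilon_0$ small enough (since the interpolated dilation factor $(1-t)\,2\sqrt{3}+t\,\tilde{A}(\varepsilon)$ stays near $2\sqrt{3}$), and one has $|\Phux'(z)| \lesssim e^{-|\Real(z)|/\sqrt{2}}$ there, so the required bound does hold; but note that Lemma~\ref{lem:exp_decay} as stated only asserts $\Phux' \in L^{\infty}(\Sigma)$, so the exponential decay of $\Phux'$ on the cone is an additional (easy) ingredient you should make explicit. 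The paper, incidentally, derives the $L^{\infty}$-valued holomorphy directly from the quantitative estimate \eqref{eq:Phol_estimate} rather than via a Morera/weak-holomorphy argument.
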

\begin{proof}
    Define an extension of $\Phol$ to $B(\varepsilon_0)$ by 

    \begin{equation}
        \Phollim(x) = \lim_{\varepsilon \rightarrow 0} \varepsilon \Pren(\varepsilon x) = \sqrt{12} \Phux(\sqrt{12} x) - \sqrt{3}. \label{eq:Phollim}
    \end{equation}

    for $\varepsilon = 0$ and by the formula \eqref{eq:Phol} for $\varepsilon \in B'(\varepsilon_0)$ (recall that $\varepsilon \mapsto \zminus, \zzero, \zplus$ is defined for all $\varepsilon \in B'(\varepsilon_0)$). Holomorphicity with values in $L^{\infty}(\bbR)$ then follows from \eqref{eq:Phol_estimate}. 
    
    To see \eqref{eq:Phol_estimate} fix $\varepsilon \in B(\varepsilon_0)$. Firstly, via \eqref{eq:Phol} and \eqref{eq:ren_quantities} we have 

    \begin{align}
        &\left\vert \Phol(x) - \Phollim(x) - \varepsilon \frac{1 + \alpha}{3} \right\vert \\
        &\leq \left\vert \left( \varepsilon \Aren \Phux(\varepsilon \Aren x) + \varepsilon \zminus \right) - \left( \sqrt{12} \Phux(\sqrt{12} x) - \sqrt{3} \right) - \varepsilon \frac{1 + \alpha}{3} \right\vert \\
        &\lesssim \underbrace{\left\vert \Phux(\sqrt{12} x) \right\vert}_{\lesssim 1} \underbrace{\left\vert \varepsilon \Aren - \sqrt{12} \right\vert}_{\lesssim \vert \varepsilon \vert^2, ~ \text{Thm.} \ref{thm:wave_data}} + \underbrace{\left\vert \Phux (\varepsilon \Aren x) - \Phux(\sqrt{12} x) \right\vert}_{=: I_{\varepsilon}} \underbrace{\vert \varepsilon \Aren \vert}_{\lesssim 1} + \underbrace{\left\vert \varepsilon \zminus + \sqrt{3} - \varepsilon \frac{1 + \alpha}{3} \right\vert}_{\lesssim \vert \varepsilon \vert^2, ~ \text{Lem.} \ref{lem:three_real_solutions}}.
    \end{align}
    
    It is thus sufficient to show that $I_{\varepsilon} \lesssim \varepsilon^{3/2}$ independently of $x \in \bbR$. Define the following three regions of $\bbC$ - see Figure \ref{fig:sigma_regions}:

    \begin{align}
        \Sigma^+ &:= \{ z \in \bbC : \vert \Ima(z) \vert \leq \vert \Real(z) \vert, \Real(z) \geq 0 \}, \label{eq:sigma_plus} \\
        \Sigma^- &:= \{ z \in \bbC : \vert \Ima(z) \vert \leq \vert \Real(z) \vert, \Real(z) < 0 \}, \label{eq:sigma_minus} \\
        \Sigma &:= \Sigma^- \cup \Sigma^+ \label{eq:sigma_full}
    \end{align}
    
    Recall from Theorem \ref{thm:wave_data} that 

    \begin{equation}
        \varepsilon \Aren = \sqrt{12} + \calO(\varepsilon^2), \label{eq:Aren_asymptotic}
    \end{equation}

    i.e. that $\varepsilon \Aren$ lies in a ball of radius $\calO(\vert \varepsilon \vert^2)$ around $\sqrt{12} \in \Sigma^+$. By potentially shrinking $\varepsilon_0 > 0$ even smaller, we may thus assume that $\varepsilon \Aren \in \Sigma^+$, implying in turn that $\{ \varepsilon \Aren x : x \in \bbR_{\pm} \} \subseteq \Sigma^{\pm}$. We then distinguish three cases:

    \begin{enumerate}   
        \item Let $\vert x \vert \leq \vert\varepsilon\vert^{-1/2}$. Since both $\varepsilon \Aren x$ and $\sqrt{12} x$ lie in the same semi-cone $\Sigma^{\pm}$, so does the line segment connecting the two. Thus, since $\Phux$ is holomorphic on a neighborhood of $\Sigma$ by Lemma \ref{lem:exp_decay}, we have
         
        \begin{align}
            \vert \Phux(\varepsilon \Aren x) - \Phux(\sqrt{12}x) \vert &\leq \underbrace{\sup_{z \in \Sigma} \vert \Phux'(z) \vert}_{\lesssim 1, ~ \text{Lem.} \ref{lem:exp_decay}} \underbrace{\vert \varepsilon \Aren - \sqrt{12} \vert}_{\lesssim \vert\varepsilon\vert^2} \underbrace{\vert x \vert}_{\lesssim \vert\varepsilon\vert^{-1/2}} \lesssim \vert\varepsilon\vert^{3/2}.
        \end{align}

    \item Let $x > \vert\varepsilon\vert^{-1/2}$. Then $\varepsilon \Aren x \in \Sigma^+$, and via the decay estimate in $\Sigma^+$ from Lemma \ref{lem:exp_decay}, we have

        \begin{align}
            \vert \Phux(\varepsilon \Aren x) - \Phux(\sqrt{12}x) \vert &\leq \vert \Phux(\varepsilon \Aren x) - 1 \vert + \vert \Phux(\sqrt{12}x) - 1 \vert \\
            &\lesssim \exp\Bigg(- \underbrace{\frac{\Real(\varepsilon \Aren)}{\sqrt{2}}}_{\gtrsim 1} \underbrace{x}_{> \vert\varepsilon\vert^{-1/2}} \Bigg) + \exp\Bigg(- \underbrace{\frac{\sqrt{12}}{\sqrt{2}}}_{\gtrsim 1} \underbrace{x}_{> \vert\varepsilon\vert^{-1/2}} \Bigg) \\
            &\lesssim \vert\varepsilon\vert^{3/2}.
        \end{align}

    \item Let $x < -\vert\varepsilon\vert^{-1/2}$. Then $\varepsilon \Aren x \in \Sigma^-$, and via the decay estimate in $\Sigma^-$ from Lemma \ref{lem:exp_decay}, we have

        \begin{align}
            \vert \Phux(\varepsilon \Aren x) - \Phux(\sqrt{12}x) \vert &\leq \vert \Phux(\varepsilon \Aren x) \vert + \vert \Phux(\sqrt{12}x) \vert \\
            &\lesssim \exp\Bigg(\underbrace{\frac{\Real(\varepsilon \Aren)}{\sqrt{2}}}_{\lesssim 1} \underbrace{x}_{< -\vert\varepsilon\vert^{-1/2}} \Bigg) + \exp\Bigg(\underbrace{\frac{\sqrt{12}}{\sqrt{2}}}_{\lesssim 1} \underbrace{x}_{< -\vert\varepsilon\vert^{-1/2}} \Bigg) \\
            &\lesssim \vert\varepsilon\vert^{3/2}.
        \end{align}
    \end{enumerate}

    Thus $I_{\varepsilon} \lesssim \vert\varepsilon\vert^{3/2}$ uniformly over $x \in \bbR$ and the result follows.
\end{proof}

\begin{lem} \label{lem:Q_holo}
    Let $\varepsilon_0 > 0$ be as in Lemma \ref{lem:three_real_solutions}. Then the function $\Qholnoeps: B'(\varepsilon_0) \cap (0, \infty) \rightarrow L^{\infty}(\bbR)$, defined by 

    \begin{equation}
        \Qhol(x) := \varepsilon^2 \Qren(\varepsilon x), \quad x \in \bbR, \label{eq:Qhol}
    \end{equation}

    has a holomorphic extension to all of $B(\varepsilon_0)$ satisfying 

    \begin{equation}
        \sup_{x \in \bbR} \vert \Qhol(x) - \Qhollim(x) \vert \lesssim \varepsilon^{3/2}.
    \end{equation}
\end{lem}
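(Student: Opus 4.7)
\medskip

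\noindent\textbf{Proof proposal for Lemma \ref{lem:Q_holo}.}
The plan is to rewrite $\Qhol$ explicitly as a polynomial in $\Phol$, $\varepsilon$, and $\sren$, transfer holomorphicity from $\Phol$ (Lemma \ref{lem:Phi_holo}) and $\sren$ (which is holomorphic in $\varepsilon$ by the construction in the proof of Theorem \ref{thm:wave_data}), and then read off the error estimate from the expansion of $\Phol$, observing a crucial cancellation of the linear-in-$\varepsilon$ terms.

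First, I would substitute the definition of $\fren$ into $\Qren$ to get $\Qren(x) = 3\Pren(x)^2 - 2(1+\alpha)\Pren(x) - 3\varepsilon^{-2} + \alpha + \tfrac{1}{4}\sren^2$. Multiplying by $\varepsilon^2$ and rescaling the argument gives the pointwise identity
\begin{equation}
    \Qhol(x) = 3 \Phol(x)^2 - 2(1+\alpha)\varepsilon\, \Phol(x) - 3 + \alpha \varepsilon^2 + \tfrac{1}{4}\varepsilon^2 \sren^2,
\end{equation}
valid for $\varepsilon \in B'(\varepsilon_0)\cap(0,\infty)$. Since $\Pholnoeps$ extends to a holomorphic $L^\infty(\bbR)$-valued map on $B(\varepsilon_0)$ by Lemma \ref{lem:Phi_holo}, and $\varepsilon\mapsto\sren$ is holomorphic on $B(\varepsilon_0)$ (a polynomial in $\Aren$ and $\ahux$, both of which are holomorphic by Lemma \ref{lem:three_real_solutions}), the right-hand side defines a holomorphic extension $\Qholnoeps: B(\varepsilon_0)\to L^\infty(\bbR)$. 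Setting $\varepsilon=0$ and using $\Phol(x)\to\Phollim(x)$ yields the limit $\Qhollim(x) = 3\Phollim(x)^2 - 3$, in agreement with the formula one obtains by passing to the limit directly.

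For the estimate, I would decompose
\begin{equation}
    \Qhol - \Qhollim = 3\bigl(\Phol^2 - \Phollim^2\bigr) - 2(1+\alpha)\varepsilon\,\Phol + \alpha \varepsilon^2 + \tfrac{1}{4}\varepsilon^2 \sren^2,
\end{equation}
and expand the first bracket as $(\Phol - \Phollim)(\Phol + \Phollim)$. By Lemma \ref{lem:Phi_holo}, $\Phol = \Phollim + \varepsilon(1+\alpha)/3 + R_\varepsilon$ with $\|R_\varepsilon\|_{L^\infty}\lesssim |\varepsilon|^{3/2}$, and $\Phol$, $\Phollim$ are uniformly bounded in $L^\infty$ for $\varepsilon$ small. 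Substituting gives
\begin{equation}
    3\bigl(\Phol^2 - \Phollim^2\bigr) = 2(1+\alpha)\varepsilon\,\Phollim + \calO(|\varepsilon|^{3/2}),
\end{equation}
uniformly in $x$, while $-2(1+\alpha)\varepsilon\,\Phol = -2(1+\alpha)\varepsilon\,\Phollim + \calO(|\varepsilon|^{3/2})$ by the same expansion. The $\calO(\varepsilon)$ contributions cancel exactly, and since $\sren = \calO(\varepsilon^2)$ by Theorem \ref{thm:wave_data} the remaining terms are $\calO(\varepsilon^2)$; taken together, $\sup_{x\in\bbR}|\Qhol(x) - \Qhollim(x)| \lesssim |\varepsilon|^{3/2}$, which also upgrades the holomorphic extension to be continuous (and hence holomorphic) at $\varepsilon=0$ in the $L^\infty$-norm.

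The main conceptual point — and the only place where one could lose the rate — is the cancellation between the linear-in-$\varepsilon$ contribution generated by $3(\Phol^2 - \Phollim^2)$ and the $-2(1+\alpha)\varepsilon\,\Phol$ term; this cancellation is not accidental but tied to the precise constant $(1+\alpha)/3$ in the expansion of $\Phol$, which traces back to the constant term in the Laurent series of $\zpm$ computed in Lemma \ref{lem:three_real_solutions}. Verifying this cancellation is the only non-routine arithmetic; everything else reduces to bookkeeping of holomorphic dependencies.
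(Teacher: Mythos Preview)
Your proposal is correct and follows essentially the same route as the paper: both rewrite $\Qhol$ as $3\Phol^2 - 2(1+\alpha)\varepsilon\Phol - 3 + \alpha\varepsilon^2 + \tfrac14(\varepsilon\sren)^2$, identify $\Qhollim = 3\Phollim^2 - 3$, and isolate the same cancellation of the linear-in-$\varepsilon$ contribution coming from $\partial_\varepsilon\Phol\vert_{\varepsilon=0} = (1+\alpha)/3$. The only cosmetic difference is that the paper packages the cancellation as the second Taylor remainder of $\varepsilon\mapsto\Phol^2$ (invoking holomorphicity to get $\calO(\varepsilon^2)$ there), whereas you expand $(\Phol-\Phollim)(\Phol+\Phollim)$ and $-2(1+\alpha)\varepsilon\Phol$ separately and match the $\calO(\varepsilon)$ pieces by hand; both yield the stated $\calO(|\varepsilon|^{3/2})$ bound.
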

\begin{proof}
    Firstly note that 

    \begin{align}
        \Qhol(x) &= \varepsilon^2 \Qren(\varepsilon x) \\
        &= -\varepsilon^2 \fren'(\Pren(\varepsilon x)) + \frac{1}{4} \varepsilon^2 \sren^2 \\
        &= -\varepsilon^2 \Big( -3\Pren(\varepsilon x)^2 + 2(1 + \alpha) \Pren(\varepsilon x) + (3 \varepsilon^{-2} - \alpha)\Big) + \frac{1}{4} \varepsilon^2 \sren^2 \\
        &= 3 \left( \varepsilon \Pren(\varepsilon x) \right)^2 - \varepsilon 2(1 + \alpha) \varepsilon \Pren(\varepsilon x) - (3 - \varepsilon^2 \alpha) + \frac{1}{4} (\varepsilon \sren)^2 \\
        &= 3 \Phol(x)^2 - \varepsilon 2(1 + \alpha) \Phol(x) - (3 - \varepsilon^2 \alpha) + \frac{1}{4} (\varepsilon \sren)^2.
    \end{align}

    Therefore, by Lemma \ref{lem:Phi_holo} 

    \begin{align}
        &\vert \Qhol(x) - \Qhollim(x) \vert \\
        &= \left\vert \left( 3 \Phol(x)^2 - \varepsilon 2 (1 + \alpha) \Phol(x) - 3 + \varepsilon^2 \alpha + \frac{1}{4} \sren^2 \right) - \left( 3 \Phol(x)^2 - 3\right) \right\vert \\
        &\leq 3 \underbrace{\vert \Phol(x)^2 - \Phollim(x)^2 - \varepsilon 2 \frac{1 + \alpha}{3} \Phol(x) \vert}_{=: I_{\varepsilon}} + \varepsilon^2 \vert \alpha \vert + \frac{1}{4} \underbrace{\vert \sren \vert^2}_{\lesssim \vert\varepsilon\vert^4}
       \end{align} 

    By Lemma \ref{lem:Phi_holo}, $\varepsilon \mapsto \Phol \in L^{\infty}(\bbR)$ is holomorphic on $B(\varepsilon_0)$ with $\partial_{\varepsilon} \Phollim = \frac{1+\alpha}{3}$. Hence, $I_{\varepsilon}$ equals the second Taylor remainder of $\Phol^2$ at $\varepsilon = 0$ and is thus $\calO(\varepsilon^2)$ uniformly over $x \in \bbR$. The result follows.
\end{proof}

\begin{proof}[Proof of Proposition \ref{prop:holo_family}]
    Let $\varepsilon \in B(\varepsilon_0)$ be arbitrarily. Then by Lemma \ref{lem:Q_holo}

    \begin{align}
        \left\Vert \Qhol - \Qhollim \right\Vert_{\infty} \lesssim \varepsilon^{3/2}.
    \end{align}

    Dividing by $\varepsilon$ and taking the limit $\varepsilon \rightarrow 0$ shows that the family of multiplication operators $\{ M_{\Qhol} : \varepsilon \in B(\varepsilon_0) \}$ is a bounded-holomorphic family (with vanishing derivative) in the sense of \cite[Ch. VII, §1.2]{katoPerturbationTheoryLinear1995}, i.e. a holomorphic function $B(\varepsilon_0) \rightarrow \calB(L^2(\bbR))$. Hence, by \cite[Prob. VII.1.2.]{katoPerturbationTheoryLinear1995} the family $\{ - \partial_x^2 + \Qhol : \varepsilon \in B(\varepsilon_0) \}$ is a holomorphic family in the sense of Kato.
\end{proof}

\begin{rem}
    One can show the stronger statement that $\{ \Hhol : \varepsilon \in B(\varepsilon_0) \}$ is a holomorphic family of type (A) in the sense of Kato - see e.g. \cite[Ch. VII, §1]{katoPerturbationTheoryLinear1995} - by hand from Lemma \ref{lem:Q_holo}. However, since we will not need this fact, we omit the proof here. The family $\{ \Hhol : \varepsilon \in B(\varepsilon_0) \}$ is not a \textit{self-adjoint holomorphic family} (in the sense of \cite{katoPerturbationTheoryLinear1995}) though, since $\sH^{\hol}_{\overline{\varepsilon}} \neq (\sH^{\hol}_{\varepsilon})^{\ast}$ for $\varepsilon \in B(\varepsilon_0) \setminus \bbR$, but this is also not needed for our purposes.
\end{rem}

\begin{proof}[Proof of Theorem \ref{thm:sigma_0}]
\label{page:point}
    Fix $\varepsilon \in B'(\varepsilon_0) \cap (0, \infty)$ and let $\lambda \in \sigma_0(\Lren)$ be arbitrary. We firstly note that $0 \in \sigma_0(\Lren)$ and that it is the largest eigenvalue of $\Lren$: By Theorem \ref{thm:wave_data}, Equation \eqref{eq:ren_AC_skeleton} admits a travelling wave solution with profile $\Pren$. Hence, by translational symmetry, $\Pren' \in \ker \Lren$. For $\varepsilon_0$ small enough, $0 < \inf \sigma_{\ess}(\Hren)$ by Proposition \ref{prop:Schrödinger} and thus $\rho_{\varepsilon} \Pren' \in \ker \Hren$ by Equation \eqref{eq:Schrodinger_eigenfunction}. Because $\Pren$ is monotone, $\rho_{\varepsilon} \Pren'$ is strictly positive and thus by \cite[Thm. 9.40]{teschlMathematicalMethodsQuantum2014}, $0$ is the smallest eigenvalue of the Schrödinger operator $\Hren$ and thus the largest of $\Lren$.

    Now let $\lambda \in \sigma_0(\Lren) \setminus \{ 0 \}$. If $\Real(-\lambda) \geq \inf \sigma_{\ess}(\Hren)$, then by Proposition \ref{prop:Schrödinger}, the condition in \eqref{eq:sigma_0_statement} is trivially satisfied.

    If $\Real(-\lambda) < \inf \sigma_{\ess}(\Hren)$, then $- \lambda \in \sigma_{\pt}(\Hren)$ by Proposition \ref{prop:eigenvector_of_Schrödinger}, and $- \varepsilon^2 \lambda \in \Hhol$ by Lemma \ref{lem:scaling_Hren_Hhol}. Since $\{ \Hhol : \varepsilon \in B(\varepsilon_0) \}$ is a holomorphic family in the sense of Kato by Proposition \ref{prop:holo_family}, by analytic perturbation theory (e.g. \cite[Thm. XII.8]{reedMethodsModernMathematical1978IV}, \cite[Thm. VII.1.8.]{katoPerturbationTheoryLinear1995}) the function $\varepsilon \mapsto - \varepsilon^2 \lambda$ is holomorphic on $B(\varepsilon_0)$ and thus bounded. Therefore, $\lambda \simeq -\varepsilon^{-2} + \calO(\varepsilon^{-1})$. In particular, this is true for the second-lowest eigenvalue of $\Hren$ (which is positive by the arguments above), i.e. the second-largest of $\Lren$, which concludes the proof. 
\end{proof}

\appendix

\section{Auxiliary lemmas}

\begin{lem} \label{lem:conjugate_lin_op}
    Let $X$ and $Y$ be complex Banach spaces, let $\sL: X \supseteq \calD(\sL) \rightarrow X$ be a densely defined and closed linear operator on $X$, and let $T: X \rightarrow Y$ be a linear isomorphism. Then $T\sL T^{-1}: Y \supseteq T\calD(\sL) \rightarrow Y$ is a densely defined and closed linear operator on $Y$ and (in the notation of \eqref{eq:sigma_not_F} - \eqref{eq:sigma_ess})

    \begin{equation}
        \sigma_{\not F}(\sL) = \sigma_{\not F}(T\sL T^{-1}), \quad \sigma_{k}(\sL) = \sigma_{k}(T\sL T^{-1}), \quad k \geq 0, \quad \sigma_{\ess}(\sL) = \sigma_{\ess}(T\sL T^{-1}).
    \end{equation}
\end{lem}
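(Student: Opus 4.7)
The result is essentially a bookkeeping statement, but one needs to exploit at each step that $T$ is a topological isomorphism (bounded with bounded inverse, which follows from the open mapping theorem). My plan is to first verify the structural properties of $T\sL T^{-1}$, then the invariance of each component of the spectral decomposition via the single algebraic identity
\begin{equation}
    T\sL T^{-1} - \lambda I_Y \;=\; T\,(\sL - \lambda I_X)\, T^{-1}, \qquad \lambda \in \bbC.
\end{equation}

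First I would check that $T\sL T^{-1}$ with domain $T\calD(\sL)$ is densely defined and closed. Density is immediate because $T$ is a homeomorphism, so it sends dense subsets to dense subsets. For closedness, given a sequence $y_n \in T\calD(\sL)$ with $y_n \to y$ in $Y$ and $T\sL T^{-1} y_n \to z$ in $Y$, set $x_n := T^{-1} y_n \in \calD(\sL)$. By continuity of $T^{-1}$, $x_n \to T^{-1} y$ and $\sL x_n = T^{-1}(T\sL T^{-1} y_n) \to T^{-1} z$. Closedness of $\sL$ then gives $T^{-1} y \in \calD(\sL)$ and $\sL T^{-1} y = T^{-1} z$, which rearranges to $y \in T\calD(\sL)$ and $T\sL T^{-1} y = z$.

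Next I would use the displayed identity to transport Fredholm data. Because $T$ and $T^{-1}$ are bounded bijections, they map closed subspaces to closed subspaces, preserve dimensions of kernels and cokernels, and preserve invertibility. Concretely, $\ker(T\sL T^{-1} - \lambda I_Y) = T\ker(\sL - \lambda I_X)$ and $\operatorname{ran}(T\sL T^{-1} - \lambda I_Y) = T\operatorname{ran}(\sL - \lambda I_X)$; the latter is closed in $Y$ iff the former is closed in $X$, and the quotients $Y/\operatorname{ran}(T\sL T^{-1} - \lambda I_Y)$ and $X/\operatorname{ran}(\sL - \lambda I_X)$ are isomorphic via the map induced by $T$, so their dimensions coincide. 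Hence $\sL - \lambda I_X$ is Fredholm iff $T\sL T^{-1} - \lambda I_Y$ is, with identical index, and invertibility transfers as well.

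Finally I would read off the three spectral identities: $\lambda \in \sigma_{\not F}(\sL)$ iff $\sL - \lambda I$ fails Fredholmness iff $T\sL T^{-1} - \lambda I$ does, giving $\sigma_{\not F}(\sL) = \sigma_{\not F}(T\sL T^{-1})$; the same reasoning together with index invariance yields $\sigma_k(\sL) = \sigma_k(T\sL T^{-1})$ for each $k \geq 0$; and $\sigma_{\ess}(\sL) = \sigma_{\ess}(T\sL T^{-1})$ follows by taking the union as in \eqref{eq:sigma_ess}. There is no real obstacle here beyond ensuring $T$ is a topological isomorphism so that \emph{closed} range is preserved (purely algebraic invertibility would not suffice); this is the only substantive point, and it is built into the hypothesis since a bijective bounded linear map between Banach spaces automatically has bounded inverse by the open mapping theorem.
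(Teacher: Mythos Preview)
Your proposal is correct and follows essentially the same route as the paper: both use the conjugation identity $T(\sL - \lambda I)T^{-1} = T\sL T^{-1} - \lambda I$ to transport closedness of range, kernel dimension, and cokernel dimension, and hence the Fredholm property together with its index. The only cosmetic difference is that you verify closedness of $T\sL T^{-1}$ sequentially, whereas the paper observes directly that its graph is the image of the graph of $\sL$ under the isomorphism $T \oplus T$; your explicit appeal to the open mapping theorem to secure boundedness of $T^{-1}$ is a welcome clarification that the paper leaves implicit.
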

\begin{proof}
    Since $T$ is a homeomorphism and $\calD(\sL)$ is dense, so is $T\calD(\sL)$. To see that $T\sL T^{-1}$ is closed, note that its graph is the image of the graph of $\sL$ under the isomorphism $T \oplus T: X \oplus X \rightarrow Y \oplus Y$. 

    Now, let $\lambda \in \bbC$. Then conjugation preserves

    \begin{enumerate}
        \item membership of the resolvent set, because $T(\sL - \lambda I) T^{-1} = T\sL T^{-1} - \lambda I$,
        \item closedness of the range, because 

        \begin{equation}
            \range(T\sL T^{-1}) = (T\sL T^{-1})(T\calD(\sL )) = T (\sL \calD(\sL )) = T\range(\sL ),
        \end{equation}
        \item the Fredholm property and index, because 
        
        \begin{align}
                \ker(T\sL T^{-1} - \lambda I) &= T \ker(\sL - \lambda I) \\
                Y / \range(T\sL T^{-1} - \lambda I) &= TX / T \range(\sL - \lambda I) = T(X/\range(\sL - \lambda I))
            \end{align}
    
            and thus
            
            \begin{align}
                \dimension(\ker(\sL - \lambda I)) &= \dimension(\ker(T\sL T^{-1} - \lambda I)), \\
                \codimension(\range(\sL - \lambda I)) &= \codimension(\range(T\sL T^{-1} - \lambda I)).
            \end{align}
    \end{enumerate}

    The result follows.
\end{proof}

For the travelling wave solutions of the Allen--Cahn equations it is well known that the profile decays exponentially fast to its asymptotic states \cite[Sec. 8]{sattingerStabilityWavesNonlinear1976}. Since we want to apply regular perturbation theory and thus need to allow $\varepsilon$ to be complex, the potential $\Qhol$ will involve the wave profile $\Phux$ evaluated at a complex argument. Therefore, we first need to extend the exponential decay of the classical wave to a complex neighborhood of the real axis.

\begin{figure}[htbp]
    \centering
    \begin{minipage}[t]{0.62\textwidth}
        \centering
        \includegraphics[width=\textwidth]{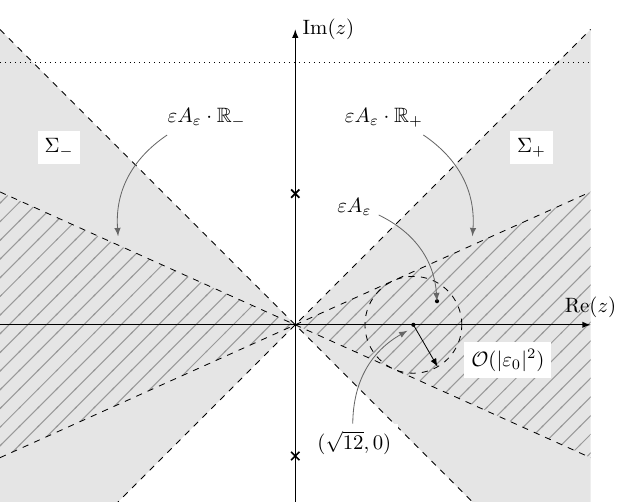} 
        \caption{Regions $\Sigma^+$ and $\Sigma^-$ in the complex plane.}
        \label{fig:sigma_regions}
    \end{minipage}%
    \hfill
    \begin{minipage}[t]{0.38\textwidth}
        \centering
        \includegraphics[width=\textwidth]{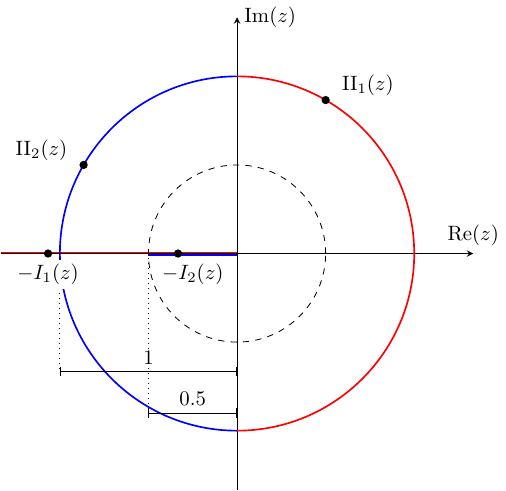} 
        \caption{The blue (case 1) and red (case 2) regions are uniformly separated, respectively.}
        \label{fig:case_distinction}
    \end{minipage}
\end{figure}

\begin{lem} \label{lem:exp_decay}
    Let $\Sigma^+, \Sigma^-, \Sigma$ be defines as in \eqref{eq:sigma_plus}, \eqref{eq:sigma_minus}, \eqref{eq:sigma_full}. There exists a neighborhood $\calN$ of $\Sigma \subseteq \bbC$ such that $\Phux$ (as defined in \eqref{eq:huxley_quant}) has a holomorphic extension to $\calN$ satisfying $\Phux, \Phux' \in L^{\infty}(\calN)$ and

    \begin{align}
        \vert \Phux(z) - 1 \vert &\lesssim \exp\left( - \frac{\Real(z)}{\sqrt{2}} \right), \quad z \in \Sigma^+, \label{eq:exp_decay_plus} \\
        \vert \Phux(z) - 0 \vert &\lesssim \exp\left( \frac{\Real(z)}{\sqrt{2}} \right), \quad z \in \Sigma^-. \label{eq:exp_decay_minus}
    \end{align}
\end{lem}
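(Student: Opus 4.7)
The plan is to work with the explicit formula
\begin{equation*}
\Phux(z) = \frac{1}{1 + e^{-z/\sqrt{2}}},
\end{equation*}
viewed as a meromorphic function on $\bbC$. Its poles are exactly the zeros of $1 + e^{-z/\sqrt{2}}$, i.e., the points $z_k = -\ii \sqrt{2}(2k+1)\pi$ for $k \in \bbZ$, all of which sit on the imaginary axis. A short geometric calculation gives $\mathrm{dist}(z_k, \Sigma) \geq \sqrt{2}\pi \sin(\pi/4) = \pi$ for every pole (the nearest rays of $\partial\Sigma$ pass through the origin at angles $\pm \pi/4, \pm 3\pi/4$). I would therefore take $\calN$ to be the open $\delta$-thickening $\{z \in \bbC : \mathrm{dist}(z, \Sigma) < \delta\}$ for some fixed $\delta \in (0, 1/2)$, on which $\Phux$ is automatically holomorphic.

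For the two decay estimates, I would rewrite the numerators so that the exponential factors become explicit:
\begin{equation*}
\Phux(z) - 1 = \frac{-e^{-z/\sqrt{2}}}{1 + e^{-z/\sqrt{2}}}, \qquad \Phux(z) = \frac{e^{z/\sqrt{2}}}{1 + e^{z/\sqrt{2}}}.
\end{equation*}
Using $|e^{\pm z/\sqrt{2}}| = e^{\pm \Real(z)/\sqrt{2}}$, both bounds \eqref{eq:exp_decay_plus} and \eqref{eq:exp_decay_minus} reduce to establishing a uniform lower bound $|1 + e^{w}| \geq c > 0$ for $w$ in the scaled cone $\{|\Ima(w)| \leq |\Real(w)|\}$.

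The main (mild) obstacle is this denominator lower bound, which I would handle by a three-region split on $\Real(w)$. If $\Real(w) \geq 1$, the reverse triangle inequality gives $|1 + e^{w}| \geq e^{\Real(w)} - 1 \geq e - 1$; if $\Real(w) \leq -1$, similarly $|1 + e^{w}| \geq 1 - e^{\Real(w)} \geq 1 - e^{-1}$; and if $|\Real(w)| \leq 1$, the cone constraint forces $|\Ima(w)| \leq 1 < \pi$, putting $w$ in a fixed compact box on which $1 + e^w$ is continuous and nonvanishing (its zeros $\ii(2k+1)\pi$ all satisfy $|\Ima| \geq \pi$). Compactness then yields a positive infimum, and the three constants combine to a single $c > 0$. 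The same three-region argument, with marginally enlarged regions and marginally worse constants, extends from $\Sigma$ to the thickening $\calN$ provided $\delta < 1/2$, which keeps the compact box disjoint from the poles. Finally, boundedness $\Phux, \Phux' \in L^{\infty}(\calN)$ is immediate: the lower bound gives $|\Phux|, |\Phux - 1| \leq 1/c$ on $\calN$, and the logistic identity $\Phux' = (\sqrt{2})^{-1} \Phux(1 - \Phux)$ then supplies boundedness of the derivative as well.
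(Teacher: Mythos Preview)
Your proof is correct and follows essentially the same strategy as the paper: locate the poles of $\Phux$ on the imaginary axis away from $\Sigma$, factor the explicit exponential out of the numerator, and reduce the decay bounds to a uniform lower bound on $|1+e^{\pm z/\sqrt{2}}|$ over the cone via a case split. Your organization is a bit more streamlined---a single symmetric three-region split in $w=\pm z/\sqrt{2}$ handles both cones at once, and the logistic identity $\Phux'=(1/\sqrt{2})\,\Phux(1-\Phux)$ gives the derivative bound directly---whereas the paper treats $\Sigma^+$ and $\Sigma^-$ with separate two-case arguments and infers boundedness of $\Phux'$ from the decay estimates.
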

\begin{proof}
    Notice that

    \begin{equation}
        \Phux(z) = \left( 1 + \exp\left(-z/\sqrt{2}\right) \right)^{-1}
    \end{equation}
    
    (and thus $\Phux'$) is $\sqrt{8}\pi$-periodic in $\Ima(z)$ with fundamental domain $\bbR \times [0, \sqrt{8} \pi)$ and a single pole at $z = \ii \sqrt{2} \pi$, which lies in the interior of the region - see Figure \ref{fig:sigma_regions}. Therefore, since the closure of $\Sigma$ does not contain the pole, $\Phux$ and its derivative are holomorphic and thus locally bounded on a neighborhood $\calN$ of $\Sigma$. Global boundedness follows from the decay estimates \eqref{eq:exp_decay_plus} and \eqref{eq:exp_decay_minus}, which we show now.  
    
    Let $z \in \Sigma^-$ be arbitrary. Then

    \begin{align}
        \vert \Phux(z) - 0\vert &= \left\vert \left( 1 + \exp\left( - \frac{z}{\sqrt{2}} \right) \right)^{-1} \right\vert \\
        &= \exp\left( \frac{\Real(z)}{\sqrt{2}} \right) \Bigg\vert \Bigg( \underbrace{\exp\left( \frac{\Real(z)}{\sqrt{2}} \right)}_{=: I(z)} + \underbrace{\exp\left( - \ii \frac{\Ima(z)}{\sqrt{2}} \right)}_{=: II(z)} \Bigg)^{-1} \Bigg\vert.
    \end{align}

    We distinguish two cases - see Figure \ref{fig:case_distinction}: 
    
    \begin{enumerate}
        \item If $\vert \Ima(z)/\sqrt{2} \vert < \pi/2$, then $II(z)$ lies on the right hemisphere of the unit circle. Hence, the distance to $-I(z)$, which lies on the negative real half-axis, is lower bounded by $1$.
        \item If $\vert \Ima(z)/\sqrt{2} \vert \geq \pi/2$, then since $z \in \Sigma^-$, we have $\Real(z)/\sqrt{2} \leq - \vert \Ima(z)/\sqrt{2} \vert \leq - \pi/2$, and thus $I(z) \leq \exp(-\pi/2) \approx 0.21$. Since $II(z)$ lies on the unit circle, the distance to $-I(z)$ which lies in the $1/2$ unit ball around $0$, is lower bounded by $1/2$.
    \end{enumerate}
    
    Thus, the term in the outer brackets is lower bounded by $1/2$, and hence the entire right-hand side is upper bounded by $2 \exp\left( \Real(z)/\sqrt{2} \right)$.

    Let $z \in \Sigma^+$ be arbitrary and recall the definition of $\Phux$ from \eqref{eq:huxley_quant}. Then

    \begin{align}
        \vert \Phux(z) - 1\vert &= \left\vert \left( 1 + \exp\left( - \frac{z}{\sqrt{2}} \right) \right)^{-1} - 1\right\vert = \left\vert \frac{-\exp\left( - \frac{z}{\sqrt{2}} \right)}{1 + \exp\left( - \frac{z}{\sqrt{2}} \right)} \right\vert \\
        &= \exp\left( - \frac{z}{\sqrt{2}} \right) \underbrace{\Bigg\vert \Bigg( 1 + \exp\left( - \frac{z}{\sqrt{2}} \right) \Bigg)^{-1} \Bigg\vert}_{= \Phux(z)}
    \end{align}

    On the one hand, since $\Phux$ is holomorphic on a neighborhood of $\calN$, it is bounded on $\Sigma^+ \cap \{ \Real(z) \leq 1 \}$. On the other hand, on the set $\Sigma^+ \cap \{ \Real(z) \geq 1 \}$, by the reverse triangle inequality 

    \begin{equation}
        \left\vert 1 + \exp\left( - \frac{z}{\sqrt{2}} \right) \right\vert \geq \left\vert 1 - \left\vert \exp\left( - \frac{z}{\sqrt{2}} \right) \right\vert \right\vert \geq 1 - \exp\left( - \frac{1}{\sqrt{2}} \right) \approx 0.51 > 0.
    \end{equation}

    Hence $\Phux$ is bounded uniformly on all of $\Sigma^+$ and the claim follows. 
\end{proof}

\begin{lem} \label{lem:existence_of_holo_sol}
    Let $\calU$ be open, $\varepsilon_0 \in \calU$, and $b_0, \ldots, b_n: \calU \rightarrow \bbC$ be holomorphic such that the polynomial $f_{\varepsilon_0}(z) = \sum_{k = 0}^n b_k(\varepsilon_0) z^k$ has $l$ simple solutions. Then there exist an open neighborhood $\calU_0 \subseteq \calU$ containing $\varepsilon_0$ and holomorphic functions $z_1, \ldots, z_l: \calU_0 \rightarrow \bbC$ such that for every $\varepsilon \in \calU_0$ 

    \begin{equation}
        f_{\varepsilon}(z_k(\varepsilon)) = 0 \quad \forall k \in \{ 1, \ldots, l \}.
    \end{equation}
\end{lem}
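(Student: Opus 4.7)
The plan is to apply Rouch\'e's theorem (equivalently, the holomorphic implicit function theorem) to each simple root individually and then intersect the resulting neighborhoods. First, I would package the data into a jointly holomorphic function $F: \calU \times \bbC \to \bbC$, $F(\varepsilon, z) := f_\varepsilon(z) = \sum_{k=0}^n b_k(\varepsilon) z^k$; joint holomorphicity is immediate since each $b_k$ is holomorphic on $\calU$ and the dependence on $z$ is polynomial.

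Let $z_1^\ast, \ldots, z_l^\ast$ denote the $l$ simple roots of $f_{\varepsilon_0}$. Fix $k \in \{1, \ldots, l\}$ and choose a closed disk $\overline{D_k}$ centered at $z_k^\ast$ whose closure is disjoint from the other roots of $f_{\varepsilon_0}$. By continuity of $F$ and compactness of $\partial D_k$, there exists an open neighborhood $V_k \subseteq \calU$ of $\varepsilon_0$ on which
\begin{equation}
    \sup_{z \in \partial D_k} \vert F(\varepsilon, z) - F(\varepsilon_0, z) \vert < \min_{w \in \partial D_k} \vert F(\varepsilon_0, w) \vert, \quad \varepsilon \in V_k.
\end{equation}
Rouch\'e's theorem then gives that for each $\varepsilon \in V_k$, the polynomial $f_\varepsilon$ has exactly one root $z_k(\varepsilon) \in D_k$ (counted with multiplicity).

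To see that $\varepsilon \mapsto z_k(\varepsilon)$ is holomorphic on $V_k$, I would appeal to the residue formula
\begin{equation}
    z_k(\varepsilon) = \frac{1}{2 \pi \ii} \oint_{\partial D_k} z \, \frac{\partial_z F(\varepsilon, z)}{F(\varepsilon, z)} \, \dd z, \quad \varepsilon \in V_k.
\end{equation}
The integrand is jointly continuous in $(\varepsilon, z)$ on $V_k \times \partial D_k$ and holomorphic in $\varepsilon$ for each fixed $z \in \partial D_k$; differentiating under the integral sign (or combining Morera's theorem with Fubini) then yields holomorphicity of $z_k$ on $V_k$.

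Finally, setting $\calU_0 := \bigcap_{k=1}^l V_k$ gives an open neighborhood of $\varepsilon_0$ on which all $l$ branches $z_1, \ldots, z_l$ are simultaneously defined, each holomorphic, and by construction satisfying $f_\varepsilon(z_k(\varepsilon)) = 0$. There is no genuine obstacle here: the whole content is the standard fact that simple roots, characterized by $\partial_z F \neq 0$, admit unique holomorphic continuations under holomorphic perturbations of the coefficients.
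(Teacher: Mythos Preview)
Your proof is correct and follows essentially the same strategy as the paper: package the coefficients into a jointly holomorphic map $F(\varepsilon,z)$, continue each simple root individually, and intersect the resulting neighborhoods. The only cosmetic difference is that the paper invokes the complex-analytic implicit function theorem directly (using $\partial_z F(\varepsilon_0, z_k^\ast) \neq 0$), whereas you spell out the equivalent Rouch\'e/residue-formula argument by hand; both yield the same local holomorphic branch.
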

\begin{proof}
    Label the $l$ distinct elements in the set $f^{-1}_{\varepsilon_0}(0)$ as $\{ s_1(\varepsilon_0), \ldots, s_l(\varepsilon_0) \}$ and let $i \in \{ 1, \ldots, l \}$ be arbitrary. Then the map 

    \begin{equation}
        f: \calU \times \bbC \rightarrow \bbC; \quad (\varepsilon, z) \mapsto f_{\varepsilon}(z) = \sum_{k = 0}^n b_k(\varepsilon) z^k
    \end{equation}

    is holomorphic, satisfies $f_{\varepsilon_0}(s_i(\varepsilon_0)) = 0$, and, since $s_i(\varepsilon_0)$ is a simple root, $\partial_z f_{\varepsilon_0}(s_i(\varepsilon_0)) \neq 0$. Thus, by the (complex analytic) implicit function theorem \cite[App. II, Lem. 3B(f)]{whitneyComplexAnalyticVarieties1972}, there exists an open neighborhood $\calU_0^i \subseteq \calU$ of $\varepsilon_0$ and a unique holomorphic function $z_i: \calU_0^i \rightarrow \bbC$ such that 

    \begin{equation}
        f_{\varepsilon}(z_i(\varepsilon)) = 0 \quad \forall \varepsilon \in \calU_0^i.  
    \end{equation}

    A common domain for all $z_i$ can be obtained by taking $\calU_0 := \bigcap_{i = 1}^l \calU_0^i$.
\end{proof}

\begin{lem} \label{lem:closed_lin_op}
    If $a_0, a_1 \in L^{\infty}(\bbR)$, then the differential operator
        
    \begin{equation}
        \sA = \partial_x^2 + a_1(x) \partial_x + a_0(x). \label{eq:diff_op}
    \end{equation}

    with domain $H^2(\bbR)$ is a closed linear operator on $L^2(\bbR)$ with non-empty resolvent set. 
\end{lem}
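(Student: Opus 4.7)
The plan is to view $\sA$ as a small perturbation of the free operator $\sA_0 := \partial_x^2$ with domain $H^2(\bbR)$, which is self-adjoint (hence closed) on $L^2(\bbR)$ with spectrum $(-\infty, 0]$. For any $\lambda \in \bbC$ with $\Real(\lambda) > 0$, Fourier multiplier calculations immediately produce the two bounds
\begin{equation}
    \|(\sA_0 - \lambda I)^{-1}\|_{L^2 \to L^2} \le \frac{1}{\Real(\lambda)}, \qquad \|\partial_x (\sA_0 - \lambda I)^{-1}\|_{L^2 \to L^2} \le \frac{1}{2\sqrt{\Real(\lambda)}},
\end{equation}
which will be the engine for both halves of the argument.

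For closedness of $\sA$ on $H^2(\bbR)$, I would apply the standard closed-perturbation theorem \cite[Thm.~IV.1.1]{katoPerturbationTheoryLinear1995}, which requires only that the perturbation $B := a_1 \partial_x + a_0$ be $\sA_0$-bounded with relative bound strictly less than one. The $a_0$-piece contributes at most $\|a_0\|_\infty \|u\|_{L^2}$. For the $a_1$-piece I would use the interpolation estimate $\|\partial_x u\|_{L^2}^2 \le \|u\|_{L^2} \|\partial_x^2 u\|_{L^2}$ on $H^2(\bbR)$ (obtained by one integration by parts plus Cauchy--Schwarz) and Young's inequality to produce
\begin{equation}
    \|B u\|_{L^2} \le \eta \|\sA_0 u\|_{L^2} + C_\eta \|u\|_{L^2}, \qquad u \in H^2(\bbR),
\end{equation}
for any $\eta > 0$, where $C_\eta$ depends only on $\eta$, $\|a_0\|_\infty$, $\|a_1\|_\infty$. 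Picking $\eta \in (0,1)$ closes the first half of the argument.

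For the resolvent set, I would fix $\lambda \in \bbC$ with $\Real(\lambda)$ large enough that the two resolvent bounds above give
\begin{equation}
    \|B (\sA_0 - \lambda I)^{-1}\|_{L^2 \to L^2} \le \frac{\|a_1\|_\infty}{2\sqrt{\Real(\lambda)}} + \frac{\|a_0\|_\infty}{\Real(\lambda)} < 1.
\end{equation}
Then $I + B(\sA_0 - \lambda I)^{-1}$ is invertible on $L^2(\bbR)$ by Neumann series, and the factorization
\begin{equation}
    \sA - \lambda I = \bigl( I + B(\sA_0 - \lambda I)^{-1} \bigr)(\sA_0 - \lambda I)
\end{equation}
on $H^2(\bbR)$ produces the bounded inverse $(\sA - \lambda I)^{-1} = (\sA_0 - \lambda I)^{-1} \bigl( I + B(\sA_0 - \lambda I)^{-1} \bigr)^{-1} : L^2(\bbR) \to H^2(\bbR)$, placing $\lambda$ in $\rho(\sA)$.

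All ingredients are classical. The main technical point to watch is that the factorization of $\sA - \lambda I$ must be taken with $I + B(\sA_0 - \lambda I)^{-1}$ on the \emph{left}, so that the Neumann series runs in $\calB(L^2(\bbR))$ where $B(\sA_0 - \lambda I)^{-1}$ has small operator norm; factoring on the right would instead require a smallness estimate on $H^2(\bbR)$. It is also important to use the sharper $1/\sqrt{\Real(\lambda)}$-decay of the gradient resolvent (rather than only $1/\Real(\lambda)$), since the first-order term in $B$ is the only one that is genuinely $\sA_0$-unbounded.
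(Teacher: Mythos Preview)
Your proof is correct and follows essentially the same route as the paper: both treat $\sA$ as a relatively bounded perturbation of $\partial_x^2$ and invoke \cite[Thm.~IV.1.1]{katoPerturbationTheoryLinear1995} for closedness, then obtain a point in the resolvent set by making $\|B(\sA_0-\lambda I)^{-1}\|$ small. The only cosmetic differences are that the paper appeals to Gagliardo--Nirenberg for the interpolation step (where you integrate by parts directly) and cites \cite[Thm.~IV.3.17]{katoPerturbationTheoryLinear1995} for the resolvent part (where you spell out the Neumann series factorization).
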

\begin{proof}
    Conjugating via the Fourier transform to a multiplication operator shows that $(\partial^2, H^2(\bbR))$ is a closed linear operator on $L^2(\bbR)$. By the Gagliardo--Nirenberg inequality \cite[Thm. 5.2]{adamsSobolevSpaces2003}, we have 

    \begin{equation}
        \left\Vert a_1 \partial_x u + a_0 u \right\Vert_{L^2} \lesssim \max_{k = 0, 1} \Vert a_k \Vert_{L^{\infty}} (\varepsilon \Vert \partial_x^2 u \Vert_{L^2} + C_{\varepsilon,k} \Vert u \Vert_{L^2})
    \end{equation}

    Thus, choosing $\varepsilon > 0$ small enough gives a relative bound $\tilde{\varepsilon} < 1$ (and corresponding $\tilde{C}_{\tilde{\varepsilon}}$) of $a_1 \partial_x + a_0$ with respect to $\partial^2$. Hence by the Kato--Rellich theorem \cite[Thm. IV.1.1.]{katoPerturbationTheoryLinear1995} $\sA$ is closed on $H^2(\bbR)$. 

    Let $\calR(\partial_x^2, \lambda)$ denote the resolvent of $\partial_x^2$. Since $\sigma(\partial_x^2) = [0, \infty)$, we may choose $\varepsilon > 0$ small enough and $\lambda_{\varepsilon} < 0$ large enough such that 

    \begin{equation}
        \tilde{C}_{\tilde{\varepsilon}} \Vert \calR(\partial_x^2, \lambda_{\varepsilon}) \Vert + \tilde{\varepsilon} \underbrace{\Vert \partial_x^2 \calR(\partial_x^2, \lambda_{\varepsilon}) \Vert}_{< \infty} < 1.
    \end{equation}

    Then by \cite[Thm. IV.3.17.]{katoPerturbationTheoryLinear1995} the resolvent set of $\sA$ contains $\lambda_{\varepsilon}$ and is thus non-empty.    
\end{proof}

\begin{lem} \label{lem:compact_multiplication}
    Let $a \in L^{\infty}(\bbR)$ such that $\lim_{R \rightarrow \infty} \sup_{\vert x \vert > R} \vert a(x) \vert = 0$. Then the multiplication operator $M_a: H^1(\bbR) \rightarrow L^2(\bbR); u \mapsto au$ is compact.
\end{lem}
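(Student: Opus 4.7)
The plan is to express $M_a$ as an operator-norm limit of operators that factor through the compact Sobolev embedding on bounded intervals, and then conclude via the closedness of the ideal of compact operators in $\calB(H^1(\bbR), L^2(\bbR))$.

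Concretely, for each $R > 0$ I would split $a = a_R^{\mathrm{in}} + a_R^{\mathrm{out}}$ with $a_R^{\mathrm{in}} := a \mathbf{1}_{[-R, R]}$ and $a_R^{\mathrm{out}} := a \mathbf{1}_{\bbR \setminus [-R, R]}$. Both lie in $L^{\infty}(\bbR)$ and yield bounded multiplication operators $H^1(\bbR) \to L^2(\bbR)$ whose sum is $M_a$. The tail satisfies
\begin{equation}
    \Vert M_{a_R^{\mathrm{out}}} u \Vert_{L^2(\bbR)} \leq \sup_{\vert x \vert > R} \vert a(x) \vert \cdot \Vert u \Vert_{L^2(\bbR)} \leq \sup_{\vert x \vert > R} \vert a(x) \vert \cdot \Vert u \Vert_{H^1(\bbR)},
\end{equation}
so by the decay hypothesis on $a$, the operator norm of $M_{a_R^{\mathrm{out}}}$ tends to $0$ as $R \to \infty$.

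For the interior piece, I would factor $M_{a_R^{\mathrm{in}}}$ as
\begin{equation}
    H^1(\bbR) \xrightarrow{\rho_R} H^1((-R, R)) \xrightarrow{\iota_R} L^2((-R, R)) \xrightarrow{M_{a\vert_{(-R,R)}}} L^2((-R,R)) \xrightarrow{e_R} L^2(\bbR),
\end{equation}
where $\rho_R$ is the bounded restriction, $\iota_R$ is the Sobolev embedding (compact on the bounded interval $(-R, R)$ by the Rellich--Kondrachov theorem), multiplication by $a\vert_{(-R,R)} \in L^{\infty}((-R, R))$ is bounded, and $e_R$ is the isometric extension by zero. Composition of a compact operator with bounded operators is compact, hence $M_{a_R^{\mathrm{in}}}$ is compact for every $R$.

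Combining the two observations, $M_a = \lim_{R \to \infty} M_{a_R^{\mathrm{in}}}$ in the operator norm of $\calB(H^1(\bbR), L^2(\bbR))$, and closedness of the compact operators inside this space yields compactness of $M_a$. The argument is essentially standard; the only non-trivial ingredient is the Rellich--Kondrachov compactness on the bounded interval $(-R, R)$, so I do not expect any serious obstacle.
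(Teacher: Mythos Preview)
Your argument is correct. It uses the same two ingredients as the paper---the decay of $a$ to control the tail and Rellich--Kondrachov on bounded intervals for the interior---but packages them as an operator-norm approximation of $M_a$ by compact operators, whereas the paper extracts a Cauchy subsequence directly. Your route is slightly cleaner: it sidesteps the diagonal-subsequence bookkeeping that the sequential argument implicitly needs (since in the paper's version the radius $R$, and hence the extracted subsequence, depends on the target precision $\varepsilon$), and it makes transparent why each truncation $M_{a_R^{\mathrm{in}}}$ is compact via the factorization through $\iota_R$.
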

\begin{proof}
    Let $(u_n)_{n \in \bbN} \subseteq H^1(\bbR)$ be a bounded sequence and let $\varepsilon > 0$ be arbitrary. By choice of $a$ and because $(u_n)_{n \in \bbN}$ is also bounded in $L^2(\bbR)$ we may choose $R > 0$ large enough such that 

    \begin{equation}
        \int_{\vert x \vert > R} \vert a(x) u_n(x) \vert^2 dx \leq \sup_{\vert x \vert > R} \vert a(x) \vert^2 \underbrace{\int_{\vert x \vert > R} \vert u_n(x) \vert^2 dx}_{\leq \Vert u_n \Vert_{L^2(\bbR)}^2 \lesssim 1} < (\varepsilon/2)^{1/2}, \quad \forall n \in \bbN.
    \end{equation}

    The restriction $(u_n \vert_{(-R,R)})_{n \in \bbN}$ is also a bounded sequence in $W^{1,2}((-R,R))$. By the Rellich--Kondrachov theorem \cite[Thm. 6.3.II]{adamsSobolevSpaces2003} there exists a subsequence $(u_{n_k})_{k \in \bbN}$ such that the sequence $(u_{n_k} \vert_{(-R,R)})_{k \in \bbN}$ is Cauchy. Thus, we may choose $N$ large enough such that for every $k,l \geq N$

    \begin{equation}
        \Vert M_{a} u_{n_k} - M_{a} u_{n_l} \Vert_{L^2(\bbR)} \leq \underbrace{\sup_{x \in \bbR} \vert a(x) \vert}_{\lesssim 1} \Vert u_{n_k} - u_{n_l} \Vert_{L^2((-R,R))} + \Vert a u_{n_k} - a u_{n_l} \Vert_{L^2(\bbR \setminus [-R,R])} < \varepsilon/2 + \varepsilon/2 = \varepsilon.
    \end{equation}

    Hence $(M_a u_{n_k})_{k \in \bbN} \subseteq (M_a u_n)_{n \in \bbN}$ is a Cauchy subsequence in $L^2(\bbR)$, which thus converges in $L^2(\bbR)$. 
\end{proof}

\printbibliography 

\end{document}